\documentclass[11pt]{article}
\usepackage{geometry}                
\geometry{letterpaper}                   
\usepackage{graphicx}
\usepackage{amssymb}
\usepackage{amsmath}
\usepackage{epstopdf}
\usepackage{amsthm}
\usepackage{threeparttable}
\usepackage{booktabs}
\usepackage{setspace}
\usepackage[ruled,vlined]{algorithm2e}
\DeclareGraphicsRule{.tif}{png}{.png}{`convert #1 `dirname #1`/`basename #1 .tif`.png}

\textwidth = 6.5 in
\textheight = 9 in
\oddsidemargin = 0.0 in
\evensidemargin = 0.0 in
\topmargin = 0.0 in
\headheight = 0.0 in
\headsep = 0.0 in
\parskip = 0.2in
\parindent = 0.0in

\newtheorem{lemma}{Lemma}
\newtheorem{theorem}{Theorem}

\newtheorem{definition}{Definition}

\newtheorem{assumption}{Assumption}
\newtheorem{observation}{Observation}

\def\mr{\mathbb{R}}

\def\lj38{{\rm LJ}_{38}}
\def\mst{\mathcal{T}^{\ast}}

 \title{Computing the Asymptotic Spectrum \\ for Networks Representing Energy Landscapes \\
using the Minimal Spanning Tree}
 
 \author{Maria Cameron$^1$ }

\begin{document}
\maketitle

 \footnotetext[1]{University of Maryland, 
 Department of Mathematics, 
 College Park, MD  20742,
 {\tt cameron@math.umd.edu}}

\abstract{The concept of metastability has
caused a lot of interest in recent years. 
The spectral decomposition of the generator matrix of a
stochastic network exposes all of the transition processes in the system.
The assumption of the existence of a low lying group of eigenvalues separated by a spectral gap, leading to 
factorization of the dynamics, has become a popular theme.
We consider stochastic networks representing potential energy landscapes where the 
states and the edges correspond to local minima and  transition states 
 respectively, 
and the pairwise transition rates are given by the Arrhenuis formula.
Using the minimal spanning tree, we construct the asymptotics for  eigenvalues
and  eigenvectors of the generator matrix starting from the low lying group. 
This construction gives rise to an
efficient algorithm for computing the asymptotic spectrum
suitable for large and complex networks.
We apply it to Wales's Lennard-Jones-38 network with 71887 states and 119853 edges where 
the underlying potential energy landscape has a double-funnel structure. Our results
demonstrate that the concept of metastability should be applied with care to this system.
For the full network, there is no significant spectral gap separating
the eigenvalue corresponding to the exit from the wider and shallower
icosahedral funnel at any reasonable temperature range.
However, if the observation time is limited, the expected spectral gap appears.

}
\section{Introduction}
\label{sec:intro}
In this work we consider stochastic networks with detailed balance where the pairwise transition rates are of the form
\begin{equation}
\label{eq1}
L_{ij}=\begin{cases}\frac{k_{ij}}{k_i}e^{-(V_{ij}-V_i)/T},&~~{\rm if}~i\sim j,\\ 0,& {\rm otherwise},\end{cases},\quad{\rm where}\quad i\neq j.
\end{equation}
Networks of this kind represent, e.g., potential energy landscapes where all critical points are isolated.  
The set of states  is equivalent to the  set of local minima, and the set of  edges 
is equivalent to the set of transition states or Morse index one saddles separating the local minima. 
States $i$ and $j$  are connected by an edge $(i,j)$ (notation $i\sim j$)  if and only if the
corresponding  local minima are separated by a single saddle\footnotemark[1].   The number 
$V_{ij}$ in Eq. \eqref{eq1} is the potential at the saddle $ij$ separating $i$ and $j$, $V_i$ is the potential at the minimum $i$, $k_{ij}$ and $k_i$ are
temperature-independent prefactors defined by the Hessian matrices  and the orders of the point groups 
of the saddle $ij$ and the minimum $i$ respectively \cite{wales0}.
$T$ is the temperature, a small parameter.
Eq. \eqref{eq1} defines the off-diagonal entries of the  generator matrix $L$ while its diagonal entries are defined so that the sum of entries in each row is zero, i.e.,
\begin{equation}
\label{eq2}
L_{ii}=-\sum_{j\neq i}L_{ij}.
\end{equation}

\footnotetext[1]{This criterion for the states being connected by an edge can be relaxed.
More generally, we connect states $i$ and $j$ by an edge $(i,j)$  if and only if 
one can find a Minimum Energy Path $\phi_{ij}(\alpha)$, $\alpha\in[0,1]$ with
the following properties:
$(i)$ $\phi_{ij}(0)=x_1$ and $\phi_{ij}(1)=x_j$, where $x_i$ and $x_j$ are the  local minima corresponding to the states $i$ and $j$;
$(ii)$ $\phi_{ij}$ passes through no other local minima other than its endpoints $x_i$ and $x_j$; 
$(iii)$ the only critical points that $\phi_{ij}$ passes through are saddles;
$(iv)$ the maximal value of the potential along $\phi_{ij}$  is achieved at a Morse index one saddle.
Then the number $V_{ij}$ is the maximal potential value along $\phi_{ij}$. 
A number of interesting phenomena regarding the Minimum Energy Paths is discussed in \cite{ckve}.
}

D. Wales \cite{wales0, wales1, wales_landscapes} proposed to model the low temperature dynamics of a molecular cluster
by the dynamics of the corresponding stochastic network.
Wales and his group developed efficient tools for generating and exploring stochastic networks representing energy landscapes.
A large collection of them can be found at the web site \cite{web}. 
Wales's stochastic networks are complex and fascinating. They exhibit metastability,
offer rich families of possible transition paths, and
involve a remarkable interplay between energetic and entropic barriers. Their study
evokes new theoretical paradigms  and inspires the development of new computational tools.

Another context where networks with pairwise transition rates of the form of Eq. \eqref{eq1}
arise is the evolutionary genetics. The networks represent
fitness landscapes in the models of evolutionary dynamics \cite{morozov,gillespie,kimura,ewens}.

Analysis of large stochastic networks is an interesting and challenging problem.
The number of states in the network representing an energy landscape  coming from chemical physics is   of the order of $10^p$, $p=3,4,5,6,\ldots$. 
The incidence matrix  is sparse but unstructured. The pairwise rates vary by tens of orders of magnitude.
Therefore, it is important to develop efficient computational tools able to cope with these difficulties.

One of the most appealing analysis tools of stochastic networks is the spectral decomposition of its generator matrix.
It reveals the whole collection of transition processes taking place in the system.
Originally, the asymptotics for the eigenvalues 
for of the generator matrices with entries of the order of $e^{-U_{ij}/T}$, without the assumption of the detailed balance,
was established by A. Wentzell \cite{wentzell1,wentzell2,f-w} in 1970s.  Wentzell's formulas,
involing optimization among the so called $W$-graphs, determine the whole collection of the eigenvalues 
up to the exponential order. 

In 2000s, Bovier and collaborators considered
systems with detailed balance and assumed the presence of 
 a spectral gap. 
They proved sharp estimates for low lying eigenvalues and the corresponding eigenvectors of Markov chains
with detailed balance in terms of capacities and exit times, and proposed 
a definition of metastability in terms of metastable points (representative points for metastable sets) \cite{bovier2002,bovier1,bovier2,bovier3}.

 Spectral analysis in the context of molecular systems was 
considered by Schuette and collaborators \cite{schuette03,schuette04}, and
another definition of metastability related to ergodicity  was proposed.  
An application of spectral analysis to clustering can be found in \cite{sarich}.

In this work, we focus on the construction of an efficient algorithm for computing the complete
asymptotic spectrum. Our starting point is Wentzell's formulas. 
We prove that in the case of detailed balance, 
the collection of the so called optimal $W$-graphs in Wentzell's formulas is nested and hence
can be built recursively 
starting from a certain minimal spanning tree and removing edges from it in a certain order. Then the
exponents determining the asymptotics of eigenvalues  as well  the asymptotics 
for eigenvectors are readily found from the optimal $W$-graphs.
These exponents also define exit rates from  certain Freidlin's cycles
 \cite{freidlin-cycles,freidlin-physicad,f-w,cam1} which are easily extracted from the 
optimal $W$-graphs as well. 
We propose a fast computational procedure for finding the collection of the optimal $W$-graphs and  the 
asymptotics for the full set of the eigenpairs 
starting from the smallest eigenvalues in the absolute value. 
Precisely, the output of the algorithm is the collection of potential differences $\Delta_k$ and sets $S_k$
such that the eigenvalues $\lambda_k$ are logarithmically equivalent to $\exp(-\Delta_k/T)$ and the eigenvectors 
$\phi_k$ are approximated by the indicator functions for the sets $S_k$. 
Modifying the stopping criterion, one can stop this algorithm
as soon as the eigenvalues exceed some provided threshold. 

Using our algorithm, we compute the asymptotic spectrum of Wales's stochastic network representing the Lennard-Jones
cluster of 38 atoms (we will refer to it as $\lj38$). 
The largest connected component of this network publicallly available via
Wales's group web site \cite{wales_network}
contains 71887 states and 119853 edges.
The $\lj38$ cluster is interesting because its potential energy landscape has a double-funnel structure
\cite{wales38,wales_landscapes}. The deeper and narrower funnel has the face-centered cubic truncated octahedron (FCC), the global minimum,
at the bottom, while
the shallower and wider funnel of icosahedral packings has the second lowest minimum (ICO)  at the bottom. These structures are show in Fig. \ref{fcc_ico}.
The double funnel feature might make us expect that the corresponding 
network is in some sense metastable.  Our results reveal that it is so in the sense of the definition by Schuette et al  \cite{schuette03,schuette04}
but not so in the sense of the definition of Bovier et al \cite{bovier2002,bovier1} at a reasonable range of 
temperatures. 
The reason is that this network has a large collection of local minima each of which is relatively high but separated
from the ground state by an even higher barrier. As a result, the set of the potential differences $\Delta_k$, $k=1,\ldots,n-1$, defining the exponents of the eigenvalues 
is relatively dense.
If the numbers $\Delta_k$ are ordered so that
$$\Delta_1\ge \Delta_2\ge\ldots\ge\Delta_{n-1},$$
(i.e., the corresponding eigenvalues are ordered according to their absolute values in the increasing order), 
the eigenvalue corresponding to exiting from
the icosahedral funnel is buried under the number 245. 
The gaps between the majority of the numbers $\Delta_k$, in particular, the gap $\Delta_{245}-\Delta_{246}$, 
are much smaller than the temperatures at which the $\lj38$
cluster is typically considered. 
Thus,  one cannot define, following Bovier et al, 
a set of  metastable points, one of which corresponds to ICO, satisfying the definition of the metastability.  
This means, that one cannot approximate the long-time dynamics of the $\lj38$ network by defining some number $m\ll n=71887$ 
of metastable sets and considering transitions between them. 
On the other hand, there is a large gap
between the number $\Delta_{245}$, determining the exit rate from the icosahedral basin, and the 
next largest $\Delta_k$ corresponding to a transition process within it. 
This means that if the system gets to the icosahedral basin, it will equilibrate there prior to exiting it.
Therefore,  the icosahedral basin is metastable in the sense of the definition by Schuette et al \cite{schuette03,schuette04}. 

We also would like to point out our use of disconnectivity graphs as a visualization tool.
Originally, they were introduced by Becker and  Karplus \cite{dgraphs} and extensively used by Wales et al  \cite{wales_dgraph,wales_book,wales_landscapes}.
Traditionally, the states are arranged along the $x$-axis arbitrarily, just so that the graph looks aesthetical.
We propose to organize the states along the $x$-axis according to some ordering of interest. 
In particular, this ordering can be by the number of the corresponding eigenvalue. In \cite{cve},
where the transition process between FCC and ICO was analyzed  at finite temperature,
we ordered states along the $x$-axis according to the committor (a. k. a.  the capacitor).

The rest of the paper is organized as follows. 
In Section \ref{sec:settings}, we provide a brief overview of some important properties of networks with detailed balance.
The theoretical relationships between the optimal $W$-graphs, Freidlin's cycles and the asymptotics of the spectrum 
are discussed in Section \ref{sec:theory}. The algorithm  for computing the asymptotics of the spectrum is introduced in Section \ref{sec:algorithms}.
The application to the $\lj38$ network is presented in Section \ref{sec:examples}.
We finish this paper with a conclusion in Section \ref{sec:conclusion}.


 \section{Spectral properties of networks with detailed balance}
\label{sec:settings}
We consider an irreducible network with a finite set of states $S$ and the generator matrix $L$ given by Eqs. \eqref{eq1}-\eqref{eq2}.
Eqs. \eqref{eq1}-\eqref{eq2} imply that the network possesses the detailed balance property
\begin{equation}
\label{detbal}
\pi_iL_{ij}=\pi_{j}L_{ji},
\end{equation}
where $\pi\equiv\{\pi_1,\pi_2,\ldots \pi_n\}$ is the equilibrium probability distribution satisfying
$$\pi^{T}L=0,\quad\sum_{i\in S}\pi_i=1.$$ 
The detailed balance condition \eqref{detbal} means that the expected numbers of transitions from state $i$ to state $j$ and vice versa per unit time are equal.

The detailed balance property dramatically simplifies the spectral analysis of the stochastic network.  
First, Eq. \eqref{detbal} implies that the generator matrix $L$ can be decomposed as 
\begin{equation}
\label{decomp}
L=P^{-1}Q,
\end{equation}
where $P=diag\{\pi_1,\pi_2,\ldots,\pi_n\}$, and $Q$ is  symmetric. 
Second, the eigenvalues of $L$ are real and nonpositive, and the eigenvectors of 
$L$ are orthogonal with respect to 
the inner $P$ product. 
These facts can be deduced from the similarity of  
$L=P^{-1}Q$ and the symmetric matrix $P^{-1/2}QP^{-1/2}$,
and the strict diagonal dominance of the matrix $(tI-L)$ for any $t>0$. 
The irreducibility of $L$ implies that the eigenvalue $0$ is simple.
We will write the matrix of eigenvalues of $L$ as 
\begin{equation}
\label{evals}
\Lambda:=diag\{0,-\lambda_1,-\lambda_2,\ldots,-\lambda_{n-1}\},\quad{\rm where}\quad
0<\lambda_1\le\lambda_2\le\ldots\le\lambda_{n-1}.
\end{equation}
Third, the eigen-decompositions of the matrices $L$ and $L^T$ can be written as 
\begin{equation}
\label{egg}
L=\Phi\Lambda\Phi^TP,\quad L^{T}=P\Phi\Lambda\Phi^{T}.
\end{equation}
In particular, since the row sums of $L$ are zeros, the eigenvector corresponding to the zero eigenvalue can be chosen to be $e:=[1,1,\ldots,1]^T$. 
The corresponding eigenvector of $L^T$ is $Pe\equiv \pi$, the equilibrium probability distribution.

The spectral decomposition of the stochastic network with detailed balance leads to a nice representation of the time
evolution of the probability distribution.
The probability distribution evolves according to the forward Kolmogorov (a. k. a.  the Fokker-Planck) equation
\begin{equation}
\label{eq:fk}
\frac{dp}{dt}=L^Tp,\quad p(0)=p_0.
\end{equation}
Using Eqs. \eqref{evals} and  \eqref{egg} one can write the solution of Eq. \eqref{eq:fk}  in the form
\begin{equation}
\label{eq:tev}
p(t)=e^{tL^T}p_0=P\Phi e^{t\Lambda }\Phi^{T}p_0 = \pi+\sum_{j=1}^{n-1} (\phi_j^Tp_0)P\phi_je^{-\lambda_jt},
\end{equation}
where $\Phi=[e,~\phi_1,~\ldots,~\phi_{n-1}]$. 
Eqs. \eqref{evals} and \eqref{eq:tev} show that, no matter what  the initial probability distribution $p(0)=p_0$ is,  it will evolve eventually
toward the equilibrium distribution $\pi$. However, the components $(\phi_j^Tp_0)P\phi_je^{-\lambda_jt}$  of $p(t)$ 
with small decay rates $\lambda_j$ can remain significant for long times, $O(\lambda_j^{-1})$. 
If the temperature is sufficiently small,  
the eigenvalues $\lambda_j$ of $-L$ are logarithmically equivalent to $\exp(-\Delta_j)/T$, 
where $\Delta_j$ are the certain constants determined by the values $V_{kl}$ and $V_i$, $i,k,l,\in S$ \cite{wentzell1,wentzell2,f-w}. 
Therefore, if the temperature is small enough and all numbers $\Delta_k$ are 
distinct, then
$$0<\lambda_1\ll\lambda_2\ll\ldots\ll\lambda_{n-1}.$$


\section{The spectrum,  the minimal spanning tree, and Freidlin's cycles}
\label{sec:theory}
In this Section, we present a construction that allows us to calculate the asymptotics for the eigenvalues and eigenvectors 
starting from $\lambda_1$ and $\phi_1$ using a certain minimal spanning tree. 
Our starting point is the
result established by A. Wentzell in 1970s \cite{wentzell1,wentzell2} (also see \cite{f-w}, Chapter 6). 

\subsection{Wentzell's formulas}
Wentzell's theorem  \cite{wentzell1,wentzell2} is valid for an arbitrary irreducible stochastic network with a finite number of states, not necessarily with detailed balance, where the pairwise
 transition rates are logarithmically equivalent to $\exp(-U_{ij}/T)$.
Being adapted for networks with detailed balance where the generator matrix is of the form \eqref{eq1}-\eqref{eq2}, Wentzell's theorem reads as follows.
\begin{theorem}
\label{the1}
Let $\lambda_1<\lambda_2<\ldots<\lambda_{n-1}$ be the positive eigenvalues of  $-L$ where $L$ is the generator matrix  given by Eq. \eqref{eq1}.
Let us define the numbers $V^{(k)}$ as 
\begin{equation}
\label{vk}
V^{(k)}=\min_{g\in G(k)}\sum_{(i\rightarrow j)\in g} \left(V_{ij}-V_{i}\right),
\end{equation}
where $G{(k)}$ is the set of $W$-graphs with the set  $W=W_k$ containing $k$ states.
Then for $T\rightarrow 0$ we have
\begin{equation}
\label{well}
\lambda_k\asymp e^{-(V^{(k)}-V^{(k+1)})/T},\quad k=1,2,\ldots,n-1.
\end{equation}
where the symbol $\asymp$ denotes the logarithmic equivalence.
\end{theorem}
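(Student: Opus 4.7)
The plan is to combine the classical matrix-tree identity for the generator $L$ with a Laplace-type asymptotic extraction, and then to recover each $\lambda_k$ from the ratio of two consecutive elementary symmetric polynomials in the eigenvalues. For any $W\subset S$ write $Q(W):=\sum_{g}\prod_{(i\to j)\in g}L_{ij}$, where $g$ ranges over $W$-graphs. The generalized matrix-tree theorem (see \cite{f-w}, Chap.\ 6) gives
$$\det(\mu I + L) \;=\; \sum_{k=1}^{n} (-1)^{n-k}\, \mu^{k} \sum_{|W|=k} Q(W),$$
while $-L$ has spectrum $\{0,\lambda_1,\ldots,\lambda_{n-1}\}$, so the same determinant equals $\mu\prod_{i=1}^{n-1}(\mu-\lambda_i)$. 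Matching coefficients yields the purely positive identity
$$e_{n-k}(\lambda_1,\ldots,\lambda_{n-1}) \;=\; \sum_{|W|=k} Q(W), \qquad k=1,\ldots,n.$$

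Because $L_{ij}\asymp e^{-(V_{ij}-V_i)/T}$, each $W$-graph contributes $\exp(-\sum(V_{ij}-V_i)/T)$, so Laplace's principle---the elementary fact that $e^{-a_1/T}+\cdots+e^{-a_m/T}\asymp e^{-\min_i a_i/T}$ when all $a_i$ are constants and all coefficients are positive---picks out the minimizing graph within each fixed $W$, and then the minimizing $W$ of size $k$. This yields $e_{n-k}(\lambda)\asymp e^{-V^{(k)}/T}$. To isolate individual eigenvalues, order $\lambda_1\le\cdots\le\lambda_{n-1}$; each $e_{n-k}(\lambda)$ is a positive sum of products of $n-k$ distinct $\lambda_i$, the largest such product being $\lambda_k\lambda_{k+1}\cdots\lambda_{n-1}$, so $e_{n-k}(\lambda)\asymp\lambda_k\lambda_{k+1}\cdots\lambda_{n-1}$. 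Taking the ratio,
$$\lambda_k \;\asymp\; \frac{e_{n-k}(\lambda)}{e_{n-k-1}(\lambda)} \;\asymp\; \exp\bigl(-(V^{(k)}-V^{(k+1)})/T\bigr),$$
with the convention $V^{(n)}=0$ (the only $W$-graph when $W=S$ is the empty graph) covering the case $k=n-1$.

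The main obstacle is the final ratio step in the regime where several consecutive differences $V^{(k)}-V^{(k+1)}$ coincide: then several $\lambda_i$ share the same logarithmic order and one must rule out the possibility that partial cancellation in the elementary symmetric sums shifts the leading exponent. Positivity rescues the argument---every $L_{ij}$, hence every $Q(W)$, hence every $e_{n-k}(\lambda)$, is a positive sum of positive products, so its leading log-order is genuinely $\log(\lambda_k\cdots\lambda_{n-1})$ with no cancellation, and a short accounting shows the claim still holds to logarithmic order. The remaining technicality is the careful sign bookkeeping in the matrix-tree expansion of the first step, which is classical.
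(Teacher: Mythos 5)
The paper itself does not prove Theorem \ref{the1}: it is imported verbatim from Wentzell \cite{wentzell1,wentzell2} and \cite{f-w}, Chapter 6, so there is no in-paper proof to compare against. Your argument is a correct reconstruction of the classical proof from those references, and all the essential steps are in place: the coefficient of $\mu^k$ in $\det(\mu I+L)$ is, up to sign, $\sum_{|W|=k}Q(W)$ by the all-minors (Tutte) directed matrix-tree theorem applied to the principal minors of $-L$; each $Q(W)$ is a sum of positive terms logarithmically equivalent to $e^{-\sum(V_{ij}-V_i)/T}$ with $T$-independent prefactors $k_{ij}/k_i$, so Laplace's principle gives $e_{n-k}(\lambda)\asymp e^{-V^{(k)}/T}$; positivity and the fixed (i.e.\ $T$-independent) number of terms give $e_{n-k}(\lambda)\asymp\lambda_k\cdots\lambda_{n-1}$ regardless of whether the exponents $V^{(k)}-V^{(k+1)}$ are distinct; and logarithmic equivalence passes to ratios, yielding \eqref{well}, with $V^{(n)}=0$ handling $k=n-1$ exactly as the paper remarks after Definition \ref{w-graph}. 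The sign bookkeeping you defer is genuinely routine (check $n=2$: $\det(\mu I+L)=\mu^2-(a+b)\mu$ matches $\mu^2-\mu\sum_{|W|=1}Q(W)$). The one thing worth flagging is that your proof establishes only logarithmic equivalence, which is all Theorem \ref{the1} asserts; Wentzell's original papers prove somewhat sharper statements, but nothing in this paper relies on more than what you have shown.
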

We remind that a $W$-graph is defined as follows  \cite{f-w}.
\begin{definition}
\label{w-graph}
Let $S$ be the set of states. 
Let $W\subseteq S$ be its subset. The states in $W$ are called sinks. 
A $W$-graph is a directed graph defined on the set of states $S$ and possessing the following properties:
\begin{description}
\item[$(i)$]
Each state in $S\backslash W$ is the origin of exactly one arrow.
\item[$(ii)$]
There are no cycles in the graph.
\end{description}
Alternatively, $(ii)$ can be replaced with the condition that for every state  $i\in S\backslash W$ there 
exists a sequence of arrows leading from it to a sink $j\in W$.
\end{definition}

Thus, a $W$-graph with $k$ sinks can be constructed as follows. Pick $k$ sinks and
partition the rest of the states into $k$ subsets so that each of them contains exactly one sink. 
In each subset, draw arrows to connect the sets with the sink according to the rules in Definition \ref{w-graph}.
If states $i$ and $j$ are not connected by an edge we set $V_{ij}=\infty$.

Note that if $W=S$, the $W$-graph contains no edges. Hence $V^{(n)}$ in Eq. \eqref{vk} is zero.
Therefore, $\lambda_{n-1}\asymp V^{(n-1)}$, and the number $V^{(n-1)}$ is the smallest barrier in the network: 
$$V^{(n-1)} = \min_{i,j\in S,~i\sim j}(V_{ij}-V_i).$$
If the number of states in the system is small, one can calculate the numbers $V^{(k)}$, $k=1,2,\ldots,n-1$ 
directly using Eq. \eqref{vk} and find the asymptotics for the eigenvalues using Eq. \eqref{well}.
However, if the number of states is large, this approach becomes infeasible. 

In the next few Sections,  we will derive  recurrence relationships for
the numbers $V^{(k)}$ for the case where the pairwise rates are of the form of Eq. \eqref{eq1}, 
and dramatically simplify the calculation of
the asymptotic spectrum.

\subsection{The minimum spanning tree}
In this Section, we recall the definition of the minimum spanning tree and its crucial properties (see e.g. \cite{amo}). 
An undirected graph is called a tree if it consists of a single  connected component and contains no cycles.
Let $G(S,E,C)$ be a graph with the set of states $S$, the set of edges $E$, and the cost matrix $C=\{c_{ij}\}_{i,j\in S}$. 
If  states $i$ and $j$ are connected by an edge, the cost $c_{ij}$ is finite, otherwise $c_{ij}=\infty$.
\begin{definition}
\label{def:mst}
Let $G(S,E,C)$ be a connected graph. 
A spanning tree $\mathcal{T}=G(S,E',C)$ is a connected graph with the set of states $S$, the set 
of edges $E'\subset E$, and no cycles. The total cost of the spanning tree is defined as
$$\sigma(\mathcal{T}):=\sum_{(i,j)\in E'}c_{ij}.$$
A minimum spanning tree is a spanning tree whose total cost is minimal possible. 
\end{definition}

A minimum spanning tree has two important properties: it satisfies the cut optimality condition and the path optimality condition \cite{amo}.
A cut of a graph is a partition of its set of states into two subsets. The set of edges connecting states from the different subsets is called
a cut-set or also  a cut.  
The cut optimality condition states that a spanning tree $\mathcal{T}$ is a minimum spanning tree if and only if
for any edge $(i,j)\in\mathcal{T}$ $c_{ij}\le c_{kl}$ for every edge $(k,l)$ contained in the cut obtained by removing the edge $(i,j)$ from $\mathcal{T}$.  
The path optimality condition claims that a spanning tree $\mathcal{T}$ is a minimum spanning tree if and only if
for every edge $(k,l)\notin\mathcal{T}$, $c_{kl}\ge c_{ij}$ belonging to the unique path $w(k,l)\subset\mathcal{T}$ connecting the states $k$ and $l$. 

The cut optimality condition implies that the unique path $w^{\ast}(k,l)$ in a minimum spanning tree $\mst$ connecting the states $k$ and $l$
posesses the minimax property, i.e., 
\begin{equation}
\label{minimax}
\max_{(i,j)\in w^{\ast}(k,l)}c_{ij}=\min_{w(k,l)\in\mathcal{W}(k,l)}\max_{(i,j)\in w(k,l)}c_{ij},
\end{equation}
where $\mathcal{W}(k,l)$ is the set of all paths in $G(S,E,C)$ connecting $k$ and $l$.
We will call a path $w^{\ast}(a,b)$ connecting a pair of states $a$ and $b$ minimax if for any two
states $k,l\in w^{\ast}(a,b)$ the path $w^{\ast}(k,l)\subset w^{\ast}(a,b)$ satisfies Eq. \eqref{minimax}.
 
A minimum spanning tree does not need to be unique. 
If it is unique, then for each pair of states $k$ and $l$ there is a unique the minimax path.

For a network with pairwise rates given by Eq. \eqref{eq1} we define the cost $c_{ij}=V_{ij}$. 
This means that if the set of states of the network is equivalent to the set of  local minima of a potential energy landscape, 
and the edges correspond to the saddles separating the local minima, the cost
of the edge $(i,j)$ is the value of the potential at the saddle separating local minima $i$ and $j$.

For the rest of the paper, we will make the following genericness assumption.
\begin{assumption}
\label{as1}
The values of the potential at the states $V_i$, $i\in S$, and at the edges $V_{ij}$, $i,j\in S$, are all different.
Furthermore, all of the differences $V_{ij}-V_k$, $i,j,k\in S$, are also different.
\end{assumption}
In particular, this means that the minimum spanning tree 
where the cost $c_{ij}=V_{ij}$ is unique.
This minimum spanning tree $\mst$ is the key object for our construction.
The problem of finding  the minimum spanning tree is a well-studied (see e.g. \cite{amo}).
There exist a numbers of efficient algorithms for doing this. 


\subsection{Notations and Terminology}
In order to make our presentation clear and our equations compact, we introduce the following  notations.
\begin{itemize}
\item
A directed $W$-graph $g_{k}\in G(k)$ can be converted to a forest of $k$ trees by making all its edges undirected. We will denote this forest by $\mathcal{T}_k$.
(A graph that can be decomposed into a collection of trees is called a forest.)
\item
We will call  a $W$-graph  in $G(k)$, at which the minimum in Eq. \eqref{vk} is achieved, optimal, and  denote it by $g^{\ast}_k$. 
The corresponding  forest $\mathcal{T}_k^{\ast}$ will  also be called optimal.
\item
We will denote the $W$-set of the optimal graph $g_k^{\ast}\in G(k)$ by $W^{\ast}_k$, and call it the optimal set of sinks.
\end{itemize}

\subsection{Construction of asymptotic eigenvalues using the minimum spanning tree}
\label{sec:evals}
In this Section, we construct the set of numbers $\Delta_k$ determining the asymptotics for the eigenvalues 
using the minimum spanning tree. Simultaneously, we construct a collection of subsets $S_k\subset S$ whose
indicator functions give the asymptotics for the corresponding eigenvectors.
We start with the observation that Eq. \eqref{vk} defining the numbers $V^{(k)}$ can be rewritten as 
\begin{align}
V^{(k)}&=
\min_{g\in G(k)}\left(\sum_{(i, j)\in \mathcal{T}_k} V_{ij}-\sum_{i\in S\backslash W_k}V_{i}\right) =
\notag\\
&=\sum_{(i, j)\in \mst_k} V_{ij}+\sum_{i\in W^{\ast}_k}V_{i}-\sum_{i\in S}V_i,\label{vk2}
\end{align}
where $g^{\ast}_k\in G(k)$ is the optimal $W$-graph with $k$ sinks,  and $\mst_k$ and $W^{\ast}_k$ 
are the corresponding optimal forest and set of sinks. 
Therefore, the number $V^{(k)}$ is the sum of potentials $V_{ij}$ over the edges of the optimal forest plus the
sum of potentials over the optimal sinks minus the sum of potentials over all states. 
The last sum in Eq.  \eqref{vk2} is the same for all $W$-graphs $g_k$ and all $k=1,2,\ldots,n$.  
At this point, we can make the folowing  observation.
\begin{observation}
\label{o1}
Let $t$ be a connected component of the optimal $W$-graph $g^{\ast}_k$.
The sink $s\in t$ is the state with the minimal value of the potential among all states $i\in t$,
i.e., $V_s=\min_{i\in t}V_i$.
\end{observation}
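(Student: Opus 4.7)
The plan is to exploit the rewritten expression for $V^{(k)}$ in Eq. \eqref{vk2} together with a simple rerooting argument. Since the last term $\sum_{i\in S}V_i$ is independent of the choice of $W$-graph, finding the optimal $g^{\ast}_k\in G(k)$ is equivalent to minimizing
\[
F(g) \;:=\; \sum_{(i,j)\in\mathcal{T}_k} V_{ij} \;+\; \sum_{i\in W_k} V_i
\]
over $g\in G(k)$. The decisive feature of this expression is that the edge-sum depends only on the underlying undirected forest $\mathcal{T}_k$, whereas the sink-sum depends only on which vertex of each tree component is declared a sink.

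Given a connected component $t$ of $g^{\ast}_k$ with its designated sink $s\in W^{\ast}_k$, I would argue by contradiction. Suppose $V_s > \min_{i\in t}V_i$; by Assumption \ref{as1} there is a unique $s'\in t$ achieving this minimum, and $V_{s'}<V_s$. I would then construct a competitor $g'\in G(k)$ as follows: leave every other component of $g^{\ast}_k$ and every other sink intact, replace $s$ by $s'$ in the sink set, retain the same underlying undirected tree on $t$, and reorient every edge of $t$ toward $s'$. Because $t$ is a tree, from each vertex $i\in t$ there is a unique path to $s'$, so this orientation is well defined, every non-sink vertex has exactly one outgoing arrow (its first step along that path), and no directed cycles are introduced. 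Hence $g'\in G(k)$.

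Comparing $F(g')$ with $F(g^{\ast}_k)$, the two graphs share the same underlying undirected forest, so their edge-sums coincide; the sink-sums differ only in that $V_s$ is replaced by $V_{s'}<V_s$. Therefore $F(g')<F(g^{\ast}_k)$, contradicting the optimality of $g^{\ast}_k$.

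The only thing to verify carefully is that rerooting an oriented tree yields a valid $W$-graph, which is a one-line observation once one notes that a tree with all edges oriented toward a fixed root has no directed cycles and that every non-root vertex has precisely one outgoing edge. Assumption \ref{as1} is used only to guarantee the strict inequality $V_{s'}<V_s$, ensuring that the contradiction is genuine rather than a tie; without it one would still obtain the conclusion that \emph{some} minimizer can be chosen as the sink, which is all the argument really needs.
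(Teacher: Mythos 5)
Your argument is correct and is exactly the paper's justification, which is stated in one sentence after Observation \ref{o1}: keeping the optimal forest fixed and re-designating the sink of $t$ as the minimum-potential state strictly decreases the sink-sum in Eq. \eqref{vk2}, contradicting optimality. Your additional verification that rerooting a tree toward a new root yields a valid $W$-graph is a worthwhile detail the paper leaves implicit, but the approach is the same.
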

If Observation \ref{o1} would not hold, we would be able to reduce the sum of 
potentials at the sinks while leaving optimal forest the same.

Unfortunately, the first two
sums in Eq. \eqref{vk2} cannot be optimized independently. 
If we sort the states and the edges of the minimum spanning tree 
in the ascending order according to their potentials and take the first $k$ states to be the sinks and the  first $n-k$ edges to constitute the forest, 
there is no guarantee that 
each subtree of the resulting forest contains exactly one sink. 
Therefore,  determination of the numbers $V^{(k)}$ is a nontrivial constrained optimization problem.
Below we propose a solution to it exploiting the nested property of the optimal $W$-graphs. We claim that
$(i)$ all optimal forests $\mst_k$ are subgraphs of the minimum spanning tree $\mst$,  and $(ii)$ 
the optimal $W$-graphs are nested. 
The former together with Eq. \eqref{vk2} immediately implies that 
\begin{equation}
\label{v1}
V^{(1)}=\sum_{(i,j)\in\mst}V_{ij}+\min_{i\in S}V_i-\sum_{i\in S}V_i.
\end{equation}
The latter means that all of the sinks of the optimal $W$-graph $g_k^{\ast}$ are also sinks of  $g^{\ast}_{k+1}$, and 
all of the edges of the optimal forest $\mst_{k+1}$ are also edges of $\mst_k$:
\begin{align}
W_k^{\ast}\subset W_{k+1}^{\ast},\quad k=1,2,\ldots, n-1,\label{w}\\
\mst_k\supset \mst_{k+1},\quad k=1,2,\ldots, n-1. \label{t}
\end{align}
Hence, in order to obtain the optimal $W$-graph $g^{\ast}_{k+1}$ from the optimal $W$-graph $g^{\ast}_k$, 
one needs to add exactly one sink and remove exactly one edge. Since each subtree of the optimal forest 
$\mst_{k+1}$ must contain exactly one sink, one needs to perform three optimal picks, the last two of which need to be done simultaneously:
\begin{itemize}
\item
pick a subtree $t$ of the optimal forest $\mst_k$, 
\item
split it into two subtrees by
removing one edge; denote the subtree containing the sink of $t$ by $t'$, and the other one by $t''$, and 
\item
pick a new sink in the subtree $t''$. 
\end{itemize}
Therefore, the numbers $V^{(k)}$ satisfy the following recurrence relationships:
\begin{align}
&V^{(k+1)}=V^{(k)}-\max_{t\in\mst_k}\max_{(p,q)\in t,i\in t''}(V_{pq}-V_i),\label{rr}\\
&{\rm where}\quad t=t'\cup t''\cup\{(p,q)\},~t''\cap W_k^{\ast}=\emptyset,\quad k=1,2,\ldots n-1.\notag
\end{align}
Assumption \ref{as1} guarantees that  in Eq. \eqref{rr},
the  optimal edge to remove and the optimal sink to add are unique.
We will denote them by
$(p^{\ast}_k,q^{\ast}_k)$ and $s^{\ast}_{k+1}$ respectively.
The asymptotics of the eigenvalue $\lambda_k$ is determined by the difference $V^{(k)}-V^{(k+1)}$ according to Theorem \ref{the1} \cite{wentzell1,wentzell2,f-w}.
Taking into account Eq. \eqref{vk2} we conclude that
\begin{equation}
\label{lambda}
\Delta_k:=V^{(k)}-V^{(k+1)} =V_{p^{\ast}_kq^{\ast}_k}-V_{s^{\ast}_{k+1}},\quad \lambda_k\asymp\exp(-\Delta_k/T).
\end{equation}
In the rest of this Section we will prove our claims stated above.

First we prove that all optimal forests $\mst_k$ are subgraphs of the minimum spanning tree $\mst$.
\begin{theorem}
\label{the2}
Suppose that Assumption \ref{as1} holds.
Then the optimal $W$-graphs $g_k^{\ast}\in G(k)$, $k=1,\ldots,n$
are subgraphs of the minimum spanning tree $\mst$.
\end{theorem}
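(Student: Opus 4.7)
The plan is to argue by contradiction using an exchange argument based on the path optimality property of the minimum spanning tree $\mst$.

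Suppose for contradiction that for some $k$ the optimal forest $\mst_k$ contains an edge $(p,q)$ that is not in $\mst$. This edge lies in some tree $t$ of $\mst_k$; removing it splits $t$ into two subtrees $t'$ and $t''$ whose vertex sets partition $V(t)$, with $p\in V(t')$ and $q\in V(t'')$. By Observation \ref{o1}, the unique sink $s$ of $t$ is the minimum-potential vertex of $V(t)$, and after any edge swap that preserves the vertex set $V(t)$, the same sink choice remains admissible.

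Next I would invoke the path optimality property of $\mst$: since $(p,q)\notin\mst$, the unique path $w^\ast(p,q)\subset\mst$ satisfies $V_{ij}\le V_{pq}$ for every edge $(i,j)\in w^\ast(p,q)$, and by Assumption~\ref{as1} the inequality is strict, $V_{ij}<V_{pq}$. Because $w^\ast(p,q)$ starts in $V(t')$ and ends in $V(t'')$, it must contain at least one edge $(a,b)\in\mst$ with $a\in V(t')$ and $b\in V(t'')$, and by the above $V_{ab}<V_{pq}$.

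Now I would form a new forest $\mst_k'$ by removing $(p,q)$ from $\mst_k$ and inserting $(a,b)$ in its place. Since $(a,b)$ reconnects the same two vertex classes $V(t')$ and $V(t'')$, the resulting graph is again a forest of $k$ trees whose vertex partition is identical to that of $\mst_k$; in particular the sink set $W_k^{\ast}$ still assigns exactly one sink to each tree, so $\mst_k'$ corresponds to a valid $W$-graph in $G(k)$. By the representation \eqref{vk2}, the only change to $V^{(k)}$ is the replacement of $V_{pq}$ by $V_{ab}$, giving a strictly smaller value. This contradicts the optimality of $\mst_k$, proving that $\mst_k\subseteq\mst$ and hence that $g_k^\ast$ is a subgraph of $\mst$.

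The main obstacle is purely the bookkeeping in the exchange step: one has to verify that the swapped edge $(a,b)$ indeed crosses between $V(t')$ and $V(t'')$ (which is forced by the path argument) and that after the swap the sink selection $W_k^{\ast}$ is still legitimate (which is forced by the fact that the vertex partition into trees is unchanged). Once these two points are in hand, the strict inequality $V_{ab}<V_{pq}$ from path optimality combined with Assumption~\ref{as1} yields the contradiction immediately.
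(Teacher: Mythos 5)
Your overall strategy is exactly the paper's: assume an edge $(p,q)\in\mst_k\setminus\mst$, split its tree $t$ into $t'$ (containing the sink) and $t''$, use path optimality of $\mst$ plus Assumption \ref{as1} to get a strictly cheaper edge on $w^{\ast}(p,q)$, and swap. There is, however, one step that is false as written for $k\ge 2$: you claim that because $w^{\ast}(p,q)$ starts in $V(t')$ and ends in $V(t'')$ it must contain an edge $(a,b)$ with $a\in V(t')$ \emph{and} $b\in V(t'')$. The path $w^{\ast}(p,q)$ lives in $\mst$, not in the forest $\mst_k$, so it is free to leave the vertex set $V(t)$ altogether; the sets $V(t')$ and $V(t'')$ do not partition $S$ when $k\ge 2$, and the first edge of the path that exits $V(t'')$ may land in a completely different tree of the forest. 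Consequently your assertion that the swap leaves ``the vertex partition into trees identical to that of $\mst_k$'' need not hold, and the final paragraph's claim that the crossing property ``is forced by the path argument'' is not correct.

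The repair is small and is precisely what the paper does: the cut you should use is $\bigl(V(t''),\,S\setminus V(t'')\bigr)$. Since $w^{\ast}(p,q)$ joins $p\notin V(t'')$ to $q\in V(t'')$, it must contain an edge $(x,y)\in\mst$ with $x\in V(t'')$ and $y\notin V(t'')$, and path optimality gives $V_{xy}<V_{pq}$. Removing $(p,q)$ and adding $(x,y)$ attaches the sinkless piece $t''$ to whichever tree of the forest contains $y$ (possibly $t'$, possibly another tree); either way the result is again a forest of $k$ trees, each containing exactly one sink of $W_k^{\ast}$, so it is a valid $W$-graph in $G(k)$ with the same sink set and a strictly smaller value in Eq. \eqref{vk2}. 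With that correction your proof coincides with the paper's.
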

\begin{figure}[htbp]
\begin{center}
\includegraphics[width=0.5\textwidth]{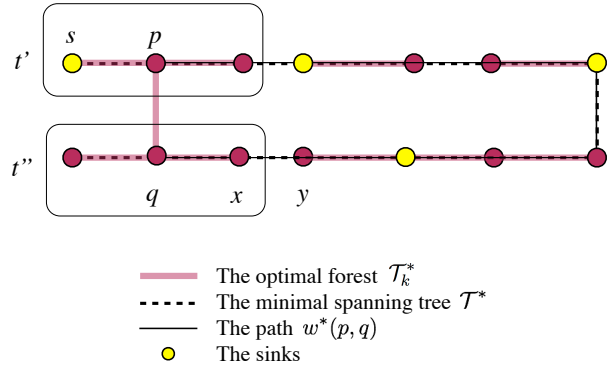}
\caption{Illustration for the proof of Theorem \ref{the2}.}
\label{fig:the2}
\end{center}
\end{figure}
\begin{proof}
We will proceed from converse.
Let $g^{\ast}_k\in G(k)$ be the optimal $W$-graph, and  $\mst_k$ be the corresponding optimal forest.
Suppose that $\mst_k$ contains an edge $(p,q)$ that does not belong to the minimum spanning tree $\mst$.
Suppose the edge $(p,q)$ belongs to a subtree $t$ of $\mst_k$.
Let $w^{\ast}(p,q)$ be the unique path in the minimum spanning tree $\mst$ connecting  the states $p$ and $q$. 
By the path optimality condition \cite{amo} combined  
with Assumption \ref{as1} we have
$$ V_{pq}>\max_{(i,j)\in w^{\ast}(p,q)} V_{ij}.$$
The removal of the edge $(p,q)$ splits the tree $t$ into two subtrees $t'$ and $t'$.
Without the loss of the generality we assume that $p\in t'$, $q\in t''$, and the sink $s$ of the tree $t$ belongs to $t'$.
Therefore, if we remove the edge $(p,q)$ from the forest $\mst_k$
and replace it with an edge $(x,y)\in w^{\ast}(p,q)$ such that $x\in t''$ and $y\notin t''$ as shown in Fig. \ref{fig:the2}, we transform the $W$-graph $g^{\ast}_k$
into another $W$-graph $g^{\star}_k$ with the same set of sinks and with a smaller sum of potentials over its edges.
This contradicts to the fact that $g^{\ast}_k$ is the optimal graph. Hence the optimal $W$-graph $g^{\ast}_k$
must contain only those edges that belong to the minimum spanning tree $\mst$.
\end{proof}

Now we prove the nested property of the optimal $W$-graphs and the recurrence relationship for the numbers $V^{(k)}$.
\begin{theorem}
\label{the3}
Suppose that Assumption \ref{as1} holds.
Then the optimal $W$-graphs are nested, i.e., Eqs. \eqref{w} and \eqref{t} hold, and the numbers $V^{(k)}$ satisfy the
recurrence relationships given by Eq. \eqref{v1} and \eqref{rr}.
\end{theorem}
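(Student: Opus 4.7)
The plan is to prove Theorem~\ref{the3} by induction on $k$, using Theorem~\ref{the2} and Observation~\ref{o1} to reduce every optimization to a choice of removal set $R\subset\mst$, with the sinks $W_k^{\ast}$ automatically fixed as the subtree minima of $\mst\setminus R$. The base case $k=1$ is immediate: $\mst_1$ must be a spanning connected subgraph of $\mst$, hence $\mst_1=\mst$; the unique sink is $\arg\min_{i\in S}V_i$ by Observation~\ref{o1}, and Eq.~\eqref{vk2} collapses to Eq.~\eqref{v1}.

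For the inductive step, assume $g_k^{\ast}$ has been determined and let $R_k:=\mst\setminus\mst_k$. Form the candidate $\hat g_{k+1}$ by one greedy split of $g_k^{\ast}$ as in Eq.~\eqref{rr}, so that $V(\hat g_{k+1})=V^{(k)}-\Delta^{\ast}$ with $\Delta^{\ast}=V_{p_k^{\ast}q_k^{\ast}}-V_{s_{k+1}^{\ast}}$; this feasible $W$-graph gives $V^{(k+1)}\le V^{(k)}-\Delta^{\ast}$. If one also proves the reverse inequality $V^{(k+1)}\ge V^{(k)}-\Delta^{\ast}$, Assumption~\ref{as1} makes $g_{k+1}^{\ast}=\hat g_{k+1}$ the unique minimizer, and Eqs.~\eqref{w}, \eqref{t}, and \eqref{rr} all follow together. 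For the reverse inequality, let $g$ be any $(k+1)$-sink $W$-graph with removal set $R\subset\mst$, $|R|=k$. Re-attaching $e\in R$ merges two subtrees of $g$, and by Observation~\ref{o1} the higher-$V$ sink $s_e^{\ast}$ of the two is dropped, producing a $k$-sink $W$-graph $g^m(e)$ with $V(g^m(e))=V(g)+V_e-V_{s_e^{\ast}}$. Optimality of $g_k^{\ast}$ gives $V(g)\ge V^{(k)}-(V_e-V_{s_e^{\ast}})$, so it suffices to exhibit some $e\in R$ with $V_e-V_{s_e^{\ast}}\le\Delta^{\ast}$. The easy subcase is $R\supset R_k$: then $R=R_k\cup\{e\}$ and merging $g$ along $e$ reconstructs $g_k^{\ast}$ exactly (same forest, same sinks), so the merge reward literally equals the split reward of $e$ viewed from $g_k^{\ast}$ and is bounded by the maximum $\Delta^{\ast}$.

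The main obstacle is the complementary subcase $R\not\supset R_k$. My approach is an edge swap: pick $f\in R_k\setminus R$ and form $R':=(R\setminus\{e'\})\cup\{f\}$ for a carefully chosen $e'\in R\setminus R_k$, with the goal $V(g_{R'})\le V(g_R)$. The candidate $e'$ should be the edge of $R$ that together with $f$ bounds the subtree of $\mst_k$ where the sink reorganization occurs; the cut-optimality of $\mst$ combined with Assumption~\ref{as1} then forces $V_{e'}<V_f$, producing an edge-cost saving $V_f-V_{e'}>0$. A sink-bookkeeping argument, grounded in Observation~\ref{o1} and the minimax property~\eqref{minimax}, then shows that the reshuffled subtree minima cannot raise the sink sum by more than this saving, so $V(g_{R'})\le V(g_R)$. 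Since every such swap strictly reduces $|R\setminus R_k|$, finitely many iterations bring us back into the easy subcase $R\supset R_k$, completing the proof. The densest step is the sink bookkeeping: one must case-split on the relative positions of $e'$, $f$, and the existing sinks inside the affected subtrees of $g_R$, and verify in each case that the new minima cannot overwhelm the edge-cost gain. This is where the $\mst$ structure is used most intensively, and where I expect the full proof to concentrate its technical effort.
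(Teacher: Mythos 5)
Your overall architecture is sound and close in spirit to the paper's: the reduction (via Theorem \ref{the2} and Observation \ref{o1}) of $V^{(k+1)}$ to a minimization over removal sets $R\subset\mst$ with sinks forced to be subtree minima, the base case, the upper bound $V^{(k+1)}\le V^{(k)}-\Delta^{\ast}$ from the greedy split, and the ``easy subcase'' $R\supset R_k$ are all correct. The paper organizes the same content differently -- it proves a lemma (Lemma \ref{lemma1}) that the first removed edge stays removed and the first two sinks stay sinks, then recurses on the connected components of $g_2^{\ast}$ -- but your induction on $k$ with an exchange argument on removal sets is a legitimate equivalent framing.

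The gap is that the entire burden of the theorem sits in the subcase $R\not\supset R_k$, and there you do not prove anything: you name a swap $R'=(R\setminus\{e'\})\cup\{f\}$, assert $V(g_{R'})\le V(g_R)$, and explicitly defer the ``sink bookkeeping'' that would justify it. That deferred step is precisely the content of the paper's proof of Lemma \ref{lemma1}$(ii)$, which occupies the Appendix and requires four distinct cases (A, B.1.1, B.1.2, B.2.1) with a \emph{different} choice of replacement edge and a \emph{different} source of contradiction in each -- sometimes the maximality of $V_{p_1^{\ast}q_1^{\ast}}-V_{s_2^{\ast}}$ in Eq.~\eqref{pqs2}, sometimes the maximality of $V_{p_1^{\ast}q_1^{\ast}}$ along the path $w^{\ast}_{12}$. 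The paper explicitly notes there is no single recipe for the exchange edge, whereas your sketch presupposes one.

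Moreover, the one concrete justification you do offer is wrong: you claim cut optimality of $\mst$ forces $V_{e'}<V_f$. Cut optimality compares an edge of $\mst$ against \emph{non-tree} edges crossing the cut it defines; here both $e'$ and $f$ lie in $\mst$ (since $R,R_k\subset\mst$), so cut optimality says nothing about their relative costs. The inequality you need is of a different nature -- it must come from the optimality of the earlier $W$-graphs (e.g., that $f=(p_j^{\ast},q_j^{\ast})$ is the maximal-potential edge on the $\mst$-path between the sinks it separates, as in Eq.~\eqref{vmax}), and even then the edge comparison alone does not close the argument without controlling how the subtree minima reshuffle. Until the swap lemma is actually proved, the theorem is not established.
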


The proof of Theorem \ref{the3} relies on
\begin{lemma}
\label{lemma1}
Suppose that Assumption \ref{as1} holds.
Then
\begin{description}
\item[$(i)$]
 the  sink $s^{\ast}_1$ of the optimal $W$-graph  $g^{\ast}_1$ is also a sink of the optimal $W$-graphs $g^{\ast}_k$, $k=2,3,\ldots,n$; 
 \item[$(ii)$]
 the edge $(p_1^{\ast},q_1^{\ast})$ that belongs to $\mst$ but does not belong to $\mst_2$, also does not belong to $\mst_k$, $k=3,\ldots,n$;
 \item[$(iii)$]
the second sink $s^{\ast}_2$ of the optimal $W$-graph $g^{\ast}_2$ is also a sink of $g^{\ast}_k$, $k=3,\ldots,n$.
\end{description}
\end{lemma}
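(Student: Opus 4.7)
Part (i) follows at once from Theorem \ref{the2} and Observation \ref{o1}: the graph $g^{\ast}_1$ has a single subtree spanning all of $S$, so by Observation \ref{o1} its sole sink $s^{\ast}_1$ is the unique global minimizer of $V_i$ over $S$ (uniqueness by Assumption \ref{as1}). For $k \geq 2$, Theorem \ref{the2} gives $\mathcal{T}^{\ast}_k \subseteq \mathcal{T}^{\ast}$, hence $s^{\ast}_1$ lies in some subtree $t$ of $\mathcal{T}^{\ast}_k$; Observation \ref{o1} then forces the sink of $t$ to be the $V$-minimizer of $t$, which must be $s^{\ast}_1$ since $V_{s^{\ast}_1}<V_i$ for every other $i\in t$.

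The heart of the argument is part (ii), which I prove by contradiction via an exchange that swaps $(p^{\ast}_1, q^{\ast}_1)$ with a carefully chosen edge of $R_k := \mathcal{T}^{\ast} \setminus \mathcal{T}^{\ast}_k$. Suppose $(p^{\ast}_1, q^{\ast}_1) \in \mathcal{T}^{\ast}_k$ for some $k \geq 3$, and let $t^*$ denote the subtree of $\mathcal{T}^{\ast}_k$ containing this edge. Removing $(p^{\ast}_1, q^{\ast}_1)$ from $t^*$ produces pieces $t^A \subseteq A^*$ and $t^B \subseteq B^*$, where $A^*$, $B^*$ are the components of $\mathcal{T}^{\ast} \setminus \{(p^{\ast}_1, q^{\ast}_1)\}$ with $s^{\ast}_1 \in A^*$. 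I choose $e^* \in R_k$ as the first edge of $R_k$ encountered along the MST path from $p^{\ast}_1$ to $s^{\ast}_1$ (so $e^*$ has an endpoint in $t^A$) or along the MST path from $q^{\ast}_1$ to $s^{\ast}_2$ (so $e^*$ has an endpoint in $t^B$); at least one such $e^*$ exists in every sub-case. The swap $R_k \mapsto (R_k \setminus \{e^*\}) \cup \{(p^{\ast}_1, q^{\ast}_1)\}$ produces an alternative $W$-graph $g' \in G(k)$, whose value is computed explicitly via Observation \ref{o1}.

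Two competing inequalities on $V_{e^*}$ then arise. The optimality $V^{(k)} \leq V(g')$ yields a lower bound of the form $V_{e^*} \geq V_{p^{\ast}_1 q^{\ast}_1} - \Delta_{\mathrm{sink}}$, where $\Delta_{\mathrm{sink}} \leq 0$ is an explicit difference of sink potentials for the subtrees affected by the swap. Meanwhile, Assumption \ref{as1} promotes the $g^{\ast}_2$-optimality to the strict inequality $V_{e^*} - V_{\min B_{e^*}} < V_{p^{\ast}_1 q^{\ast}_1} - V_{s^{\ast}_2}$ for every $e^* \neq (p^{\ast}_1, q^{\ast}_1)$. The MST geometry of the chosen $e^*$ forces $V_{\min B_{e^*}}$ to be bounded above by either $V_{s^{\ast}_2}$ or by $V_{s_{t^*}}$ (where $s_{t^*}$ is the sink of $t^*$), depending on which path was used. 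Combining the two bounds yields an inequality of the shape $\max(V_x, V_y) < V_x$, a contradiction.

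Part (iii) follows directly from (i) and (ii): once $(p^{\ast}_1, q^{\ast}_1) \in R_k$, no subtree of $\mathcal{T}^{\ast}_k$ bridges $A^*$ and $B^*$, so $s^{\ast}_2$'s subtree lies entirely in $B^*$, and since $s^{\ast}_2$ is the global minimizer of $V$ over $B^*$ it is the minimizer of its subtree, hence a sink by Observation \ref{o1}. The main obstacle in the whole argument is the case analysis in part (ii): depending on where $s^{\ast}_1$, $s^{\ast}_2$, and $s_{t^*}$ sit relative to $t^A$ and $t^B$ (and on whether $t^B=B^*$), one must verify that the chosen swap edge produces the strict clash in each configuration. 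The genericness in Assumption \ref{as1} is precisely what promotes the weak $\leq$ from $g^{\ast}_2$-optimality to the strict $<$ needed to close the contradiction.
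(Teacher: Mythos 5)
Parts (i) and (iii) of your argument are correct and essentially identical to the paper's: (i) is Observation \ref{o1} applied to the component containing the global minimizer, and (iii) follows because once $(p_1^{\ast},q_1^{\ast})\notin\mst_k$ every subtree of $\mst_k$ lies entirely in $A^{\ast}$ or in $B^{\ast}$, and $s_2^{\ast}$ is the $V$-minimizer of $B^{\ast}$. Your overall strategy for (ii) — an exchange argument swapping $(p_1^{\ast},q_1^{\ast})$ against an edge of $R_k=\mst\setminus\mst_k$ and playing the resulting lower bound off against the strict $g_2^{\ast}$-optimality inequality from Eq. \eqref{pqs2} — is also the paper's strategy.

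However, there is a genuine gap in your edge-selection rule for (ii). You claim that an edge $e^{\ast}\in R_k$ always exists on the MST path from $p_1^{\ast}$ to $s_1^{\ast}$ or on the MST path from $q_1^{\ast}$ to $s_2^{\ast}$, i.e.\ somewhere on $w^{\ast}_{12}=w^{\ast}(s_1^{\ast},s_2^{\ast})$. This is false in the configuration where the \emph{entire} path $w^{\ast}_{12}$ is contained in a single subtree $t^{\ast}$ of $\mst_k$ (the paper's Case A): for $k\ge 3$ the forest $\mst_k$ has $k$ components, but all $k-1$ removed edges may lie elsewhere in $\mst$, leaving $s_1^{\ast}$, $p_1^{\ast}$, $q_1^{\ast}$, $s_2^{\ast}$ in the same component with no edge of $R_k$ on either subpath. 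Your construction then produces no swap edge, and the contradiction cannot be launched. In this case one must instead take a replacement edge $(p,q)\in\mst\setminus\mst_k$ in the cut separating $t^{\ast}$ from the rest of the forest (such an edge exists precisely because $g_k^{\ast}$ is disconnected and $\mst$ is connected); it does not lie on $w^{\ast}_{12}$, and the contradiction comes from comparing $V_{p_1^{\ast}q_1^{\ast}}-V_{s_2^{\ast}}$ with $V_{pq}-V_{sink(q)}$ via the maximality in Eq. \eqref{pqs2}. Your remaining sub-cases (an edge of $R_k$ present on one side or the other of $(p_1^{\ast},q_1^{\ast})$, split according to which side carries the smaller sink potential) correspond to the paper's Cases B.1.1, B.1.2 and B.2.1, and your sketched inequality clash is consistent with how those are closed; but without handling Case A the proof of (ii) is incomplete.
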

Claim $(i)$ of Lemma \ref{lemma1} follows from Observation \ref{o1}.
Indeed, since the optimal graph $g^{\ast}_1$ is connected,
the state 
$$s_1^{\ast}=\arg\min_{i\in S}V_i$$ 
is the sink for all optimal $W$-graphs $g^{\ast}_k$, i.e., $s_1^{\ast}\in W_k$, $k=1,2,\ldots,n$.

The proof of Claim $(ii)$ is done from converse. The key point is to find an edge 
in the assumed-to-be-optimal $W$-graph $g^{\ast}_k$ to be replaced
with $(p_1^{\ast},q_1^{\ast})$ so that the sum in Eq. \eqref{vk2} decreases.
The choice of such an edge is different in different cases.
The proof of Claim $(iii)$ easily follows 
once Claim $(ii)$ is proven. The proofs of Claims $(ii)$ and $(iii)$ are found in the Appendix.

\begin{proof} (Theorem \ref{the3})
The optimal $W$-graph $g_1^{\ast}$ contains one connected component and one sink.
Eq. \eqref{v1} for $V^{(1)}$ immediately follows from Eq. \eqref{vk2} and Theorem \ref{the2}.

The optimal $W$-graph $g^{\ast}_2$ contains all edges of $g^{\ast}_1$ 
except for one than we denote by $(p_1^{\ast},q_1^{\ast})$, and two sinks, $s_1^{\ast}$ (by Lemma \ref{lemma1}, $(i)$)
 and $s_2^{\ast}$.
It follows from Eq. \eqref{vk2} that $(p_1^{\ast},q_1^{\ast})$ and $s_2^{\ast}$ satisfy
\begin{equation}
\label{pqs2}
\{(p_1^{\ast},q_1^{\ast}),s_2^{\ast}\}=\arg\max_{(p,q)\in\mst,~ i\in t''} (V_{pq}-V_i),
\end{equation}
where 
$
\mst=t'\cup t''\cup\{(p,q)\},~s_1^{\ast}\in t'.
$
Therefore,
 \begin{equation}
\label{v2}
V^{(2)}=\sum_{(i,j)\in\mst}V_{ij} - V_{p_1^{\ast}q_1^{\ast}}+V_{s_1^{\ast}}+ V_{s_2^{\ast}}- \sum_{i\in S}V_i = 
V^{(1)} -(V_{p_1^{\ast}q_1^{\ast}}- V_{s_2^{\ast}} ).
\end{equation}
Thus, Eqs. \eqref{v1}-\eqref{lambda} are valid for $k=1$.

By Lemma \ref{lemma1}, $(ii)$ and $(iii)$, 
the edge $(p_1^{\ast},q_1^{\ast})$ does not belong to $\mst_k$, $k=3,\ldots,n$, and
the sink $s^{\ast}_2$ of the optimal $W$-graph $g^{\ast}_2$ is also a sink of $g^{\ast}_k$, $k=3,\ldots,n$.
Therefore, we can restrict the further analysis to each of the connected components of the optimal $W$-graph $g^{\ast}_2$.
Applying Lemma \ref{lemma1} to each of the connected components we obtain that 
$(i)$ the sink $s^{\ast}_2$ of the optimal $W$-graph  $g^{\ast}_2$ is also a sink of $g^{\ast}_k$, $k=3,4\ldots,n$; 
$(ii)$ the edge $(p_2^{\ast},q_2^{\ast})$ that belongs to $\mst_2$ but does not belong to $\mst_3$, also does not belong to $\mst_k$, $k=4,\ldots,n$, 
and the third sink $s^{\ast}_3$ of $g^{\ast}_3$ is also a sink of $g^{\ast}_k$, $k=4,\ldots,n$.
Then we restrict the further analysis to each of the connected components of $g^{\ast}_3$.
Proceeding recursively, we prove the nested property of the optimal $W$-graphs given by Eqs. \eqref{w} and \eqref{t}.
Then the recurrence relationship for the numbers $V^{(k)}$ readily follows from the nested property and Eq. \eqref{vk2}.
\end{proof}


\subsection{Asymptotic eigenvectors, the optimal $W$-graphs, and Freidlin's cycles}
\label{sec:cycles}
In this Section, we discuss the relationship between the asymptotic eigenvectors, 
the optimal $W$-graphs, and Freidlin's cycles. Suppose that we have constructed the 
optimal $W$-graphs $g^{\ast}_1$, $g^{\ast}_2$, ..., $g^{\ast}_{k+1}$. 
Let  $s^{\ast}_{k+1}$ be the sink of $g^{\ast}_{k+1}$  that is not a sink of the optimal 
$W$-graphs $g^{\ast}_1$, $g^{\ast}_2$, ..., $g^{\ast}_{k}$.  Let us denote by $S_k$
the set of states in the connected component of the optimal forest  
$\mst_{k+1}$ containing the sink $s_{k+1}^{\ast}$. 
Then it follows from the theory developed 
in \cite{bovier2002} by Bovier and collaborators that the asymptotic eigenvector 
corresponding to the eigenvalue $\lambda_k\asymp V_{p_k^{\ast}q_k^{\ast}}-V_{s^{\ast}_{k+1}}$ is proportional to
the indicator function of the set $S_k$.  
I.e., if the temperature is sufficiently small, the eigenvector corresponding to $\lambda_k$ can be approximated by
\begin{equation}
\label{evec}
\phi_{k}=[\phi_{k}(1),\ldots,\phi_{k}(n)]^T,\quad{\rm where}\quad 
\phi_{k}(j)=\begin{cases}1,&j\in S_k,\\
0,&j\notin S_k.\end{cases}
\end{equation}
In addition to the set of states $S_k$ one also can consider the largest Freidlin's cycle $C_k\equiv C(s^{\ast}_{k+1})$ containing 
the sink $s^{\ast}_{k+1}$ and not containing any state with a smaller value of the potential. Below we will show that $C_k\subset S_k$. The significance of Freidlin's cycle $C_k$ is that if the system is originally in the set $S_k$, it 
will quickly get to $C_k$ and stay in $C_k$ prior to exiting  from the set $S_k$.
Hence the cycle $C_k$ can be  viewed as a metastable set of states of the network in the sense that if the system is originally in $C_k$ it will 
equilibrate in it prior to exiting it \cite{schuette03,schuette04}. It was proven in  \cite{bovier2002}, that the eigenvalue
$\lambda_{k}$ approaches the exit rate $r_k$ from the set $S_k$ which is equal to the exit rate from the cycle $C_k$ as 
the temperature tends to zero, i.e.,
$$
\lambda_k=r_k(1+o(1)).
$$

In the rest of this Section we will clarify the claim that the 
asymptotic eigenvector is the indicator function for the set $S_k$ 
and give an effective description of the Freidlin's cycle $C_k$. 
We will return to the discussion of metastability in Section \ref{sec:examples}.

If the temperature is small enough and Assumption 1 holds, then the eigenvalues satisfy
$$0<\lambda_1\ll\lambda_2\ll\ldots\ll\lambda_{k-1}\ll\lambda_k\ll\ldots.$$
Then the normalized eigenvector $\phi_k$ is approximately equal to  the normalized capacitor $h_{s_{k+1}^{\ast},W^{\ast}_k}$
(a.k.a. committor) \cite{bovier2002}, i.e.
\begin{equation}
\label{evec1}
\phi_k(j)  \approx  \frac{h_{s_{k+1}^{\ast},W^{\ast}_k}(j)}{\|h_{s_{k+1}^{\ast},W^{\ast}_k}\|},
\end{equation}
where the set $W^{\ast}_k$ is the optimal set of sinks in the $W$-graph $g^{\ast}_k$ and  
the capacitor $h_{s_{k+1}^{\ast},W^{\ast}_k}(j)$ is the probability that the process starting at state $j$ first reaches 
state $s^{\ast}_{k+1}$ rather then any state in the set $W^{\ast}_k$. The capacitor satisfies the backward Kolmogorov equation 
\begin{equation}
\label{cap}
\begin{cases}\sum_{i=1}^nL_{ij}h_{s_{k+1}^{\ast},W^{\ast}_k}(j) = 0,
& i\notin W^{\ast}_{k+1}=W^{\ast}_{k}\cup\{s^{\ast}_{k+1}\} ,\\
h_{s_{k+1}^{\ast},W^{\ast}_k}(i) = 0,& i\in W_{k}^{\ast},\\
h_{s_{k+1}^{\ast},W^{\ast}_k}(s^{\ast}_{k+1}) = 1.
\end{cases}
\end{equation} 
By our construction of the optimal $W$-graphs in Section \ref{sec:evals}, the highest potential barrier 
 separating any state $j\in S_k$
from state $s_{k+1}^{\ast}$ is smaller than the one separating it from any state in $W^{\ast}_k$, i.e., 
\begin{equation}
\label{bar}
\max_{(x,y)\in w^{\ast}(j,s^{\ast}_{k+1})} V_{xy}-
\min_{i\in  w^{\ast}(j,s^{\ast}_{k+1})}  V_i< 
V_{p^{\ast}_kq^{\ast}_k}-V_{s^{\ast}_{k+1}}\le
\max_{(x,y)\in w^{\ast}(j,s)}V_{xy}-\min_{i\in  w^{\ast}(j,s^{\ast}_{k+1}) } V_i
\end{equation}
for any $ j\in S_k$ and any $s\in W^{\ast}_k$ 
(here $w^{\ast}(a,b)$ is the unique path in the minimum spanning tree connecting states $a$ and $b$). 
Hence, as the temperature tends to zero, the process starting at state $j\in S_k$
will reach first $s^{\ast}_{k+1}$ rather than any state $s\in S\backslash S_k$ with probability tending to one.
On the other hand, by construction, 
for any state $j\in S\backslash S_k$, the highest barrier  separating it from the sink in the connected component 
of the optimal $W$-graph $g^{\ast}_{k+1}$ containing state $j$ is strictly less than $V_{p^{\ast}_kq^{\ast}_k}-V_{s^{\ast}_{k+1}}$.
Hence the probability to reach state $s^{\ast}_{k+1}$  rather than some sink in the set $W^{\ast}_k$ starting from state $j$
tends to zero as temperature tends to zero. 
Therefore, that the capacitor $h_{s_{k+1}^{\ast},W^{\ast}_k}$ approaches the indicator function of the set $S_k$.

Now we remind what are Freidlin's cycles. Originally, they were introduced by 
M. Freidlin in 1970s in order to describe the large time 
behavior of systems evolving according to the SDE $dx=b(x)dt+\sqrt{2T}dw$, where $x\in\mr^d$, $b(x)$ is
a continuously differentiable vector field, and $dw$ is the Brownian motion \cite{freidlin-cycles}. 
If  the parameter $T$ is small, the dynamics of this
system can be reduced to the dynamics of a continuous-time Markov chain where the states correspond to the
attractors of the system \cite{freidlin-cycles, freidlin-physicad,f-w}. 

Suppose that the vector field $b(x)$ is potential, i.e., $b(x)=-\nabla V(x)$, where $V(x)$ is
twice continuously differentiable and satisfies the following conditions:
(1) $V(x)$ is bounded from below, (2) $V(x)$ has  $n$ isolated local minima,
(3) all saddle points of $V(x)$ have different heights, and (4) $|V(x)|\rightarrow \infty$ as $|x|\rightarrow\infty$.
In this case,  the long time dynamics
of the system reduces to the continuous-time 
Markov chain with the generator of the form of Eq. \eqref{eq1}.
The hierarchy of Freidlin's cycles in this case was studied in \cite{cam1}. In particular, it was shown that the hierarchy of
cycles  is a full binary tree, whose leaves correspond to the potential minima or the states. They are called the zero order
cycles. In total, there are $2n-1$ cycles, 
and there is an isomorphism between the set of Freidlin's cycles and the set of edges of the minimum spanning tree. 
In \cite{freidlin-cycles,freidlin-physicad,f-w} the hierarchy of cycles was constructed using $W$-graphs. In \cite{cam1} the hierarchy of cycles
was constructed via a sequence of conversions of rate matrices into jump matrices and taking limits $T\rightarrow 0$.
Here we will give a simple and intuitive contruction. 
Its justification follows from \cite{cam1,freidlin-cycles,freidlin-physicad,f-w}.

Imagine the potential energy landscape $V(x)$, $x\in\mr^d$, and consider 
the sublevel sets
$$X_a:=\{x\in\mr^d~|~V(x)<a\},\quad a\in\mr.$$
The sets $X_a$ are compact.  For a fixed $a$, either the set $X_a$  is empty, or it consists of a finite number of connected components
each of which contains at least one local minimum. 
The collection of local minima belonging to the same connected component of $X_a$ forms a Freidlin's cycle.
Since all saddles are assumed to have different heights (Assumption \ref{as1}), 
each cycle consisting of more than one local minimum (i.e., of a nonzero order) can be decomposed into
is a union of exactly two subcycles. 
This shows that the hierarchy of cycles is a complete binary tree. 
Suppose we are gradually increasing the level number $a$ starting from 
$\min_{x\in\mr^d}V(x)$. There will be exactly $n-1$ saddles $x^{\ast}$ such that as $a$ reaches $V(x^{\ast})$, there occurs 
merging of two connected components of $X_a$ that used to be disjoing for some range of smaller values of $a$. 
These $n-1$ saddles  correspond to the edges of the minimum spanning tree.

Therefore, any Freidlin's cycle in the network with pairwise rates of the form of Eq. \eqref{eq1} can be defined as follows.
\begin{definition}
\label{def:fc}
A Freidlin's cycle $C$ containing a state $s^{\ast}\in S$ is a subset of states $C\subset S$ of the form 
 \begin{equation}
\label{cycle}
C=\left\{s\in S~\vline~\max_{(i,j)\in w^{\ast}(s^{\ast},s)}V_{ij}<a\right\},
\end{equation}
where $a$ is a constant and $w^{\ast}(s^{\ast},s)$ is the unique path in the minimum spanning tree 
connecting $s^{\ast}$ and $s$.
\end{definition}

The relationship between the optimal $W$-graphs and the Freidlin's cycles $C_k$ are given by 
\begin{theorem}
\label{the4}
Suppose that Assumption \ref{as1} holds.
Let $s^{\ast}_k$ be the sink of the optimal $W$-graph $g^{\ast}_k$ 
that is not a sink of any $g^{\ast}_j$, $j=1,2,\ldots,k-1$. 
Let $t_k$ be the subtree of the optimal forest $\mst_k$ containing the state $s_k^{\ast}$.
Then the largest Freidlin's cycle $C_k$ containing $s^{\ast}_k$ and not containing any state $s$ such that $V_s<V_{s^{\ast}_k}$ is
the subset of states of  $t_k$ 
satisfying
\begin{equation}
\label{ck}
C_k=\left\{s\in t_k ~\vline~\max_{(i,j)\in w^{\ast}(s_k^{\ast},s)}V_{ij}<V_{p^{\ast}_{k-1}q_{k-1}^{\ast}}\right\}.
\end{equation}
\end{theorem}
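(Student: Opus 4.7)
My plan is to identify the threshold defining $C_k$ from Definition \ref{def:fc} and then verify that the restriction $s\in t_k$ in the stated formula is automatic. By that definition every Freidlin cycle containing $s^{\ast}_k$ has the form $\{s:\max_{(i,j)\in w^{\ast}(s^{\ast}_k,s)}V_{ij}<a\}$, so the largest such cycle omitting every state of smaller potential is obtained by taking
\[
a^{\ast}=\min_{s:\,V_s<V_{s^{\ast}_k}}\;\max_{(i,j)\in w^{\ast}(s^{\ast}_k,s)}V_{ij}.
\]
The goal is therefore to prove $a^{\ast}=V_{p^{\ast}_{k-1}q^{\ast}_{k-1}}$ and that the resulting cycle already lies inside $t_k$.

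For the upper bound $a^{\ast}\le V_{p^{\ast}_{k-1}q^{\ast}_{k-1}}$ I would take as witness the sink $s^{\dagger}$ of the subtree $\tau$ of $\mst_{k-1}$ that is split at step $k-1$; by construction $s^{\ast}_k\in\tau$ and Observation \ref{o1} yields $V_{s^{\dagger}}<V_{s^{\ast}_k}$. The unique MST path $w^{\ast}(s^{\ast}_k,s^{\dagger})$ lies in $\tau$ and crosses $(p^{\ast}_{k-1},q^{\ast}_{k-1})$. I would then argue that this is the maximum-weight edge of the path: any edge $(p',q')$ on the path with $V_{p'q'}>V_{p^{\ast}_{k-1}q^{\ast}_{k-1}}$ would furnish an alternative step-$k-1$ split of $\tau$ whose non-sink component contains $s^{\ast}_k$, so its gap would be at least $V_{p'q'}-V_{s^{\ast}_k}$, strictly exceeding $\Delta_{k-1}$ and contradicting the optimal choice asserted by Theorem \ref{the3}.

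For the reverse bound together with the inclusion $C_k\subseteq t_k$ I would establish a single structural claim: every edge of $\mst$ that joins $t_k$ to its complement has weight at least $V_{p^{\ast}_{k-1}q^{\ast}_{k-1}}$. These crossing edges are exactly the removed edges $(p^{\ast}_i,q^{\ast}_i)$, $i\le k-1$, incident to $t_k$, and $(p^{\ast}_{k-1},q^{\ast}_{k-1})$ itself is one of them. For any other such edge with $i<k-1$, I would compare $\Delta_i$ with the gap obtained by splitting the subtree $T_i$ of $\mst_i$ containing $s^{\ast}_k$ along $(p^{\ast}_{k-1},q^{\ast}_{k-1})$, an alternative available because $\tau\subseteq T_i$; tracking the position of $\arg\min_{j\in T_i}V_j$ relative to this alternative split, and using that $s^{\ast}_k$ lies on the $t_k$-side of $(p^{\ast}_{k-1},q^{\ast}_{k-1})$, should force $V_{p^{\ast}_i q^{\ast}_i}\ge V_{p^{\ast}_{k-1}q^{\ast}_{k-1}}$. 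Once the crossing claim is in hand, any MST path from $s^{\ast}_k$ whose maximum edge weight is strictly below $V_{p^{\ast}_{k-1}q^{\ast}_{k-1}}$ cannot exit $t_k$, which simultaneously yields $C_k\subseteq t_k$ and, via Observation \ref{o1} applied inside $t_k$, the bound $a^{\ast}\ge V_{p^{\ast}_{k-1}q^{\ast}_{k-1}}$.

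The hardest part will be the crossing claim for indices $i<k-1$: the subtree $T_i$ is generally strictly larger than $\tau$, so the minimum of $T_i$ and the non-sink component produced by removing $(p^{\ast}_{k-1},q^{\ast}_{k-1})$ from $T_i$ can fall on either side of that edge. A careful case analysis using the nested chain $T_i\supseteq T_{i+1}\supseteq\cdots\supseteq T_k=t_k$ together with Observation \ref{o1} applied at several levels, and the fact that $\Delta_i>\Delta_{k-1}$ because $i<k-1$, appears to be required to close this step.
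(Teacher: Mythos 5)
Your reduction of the statement to the identity $a^{\ast}=V_{p^{\ast}_{k-1}q^{\ast}_{k-1}}$ is sound, and your upper-bound argument (the witness $s^{\dagger}$ together with the observation that a heavier edge on $w^{\ast}(s^{\ast}_k,s^{\dagger})$ would give a feasible step-$(k-1)$ split with gap exceeding $\Delta_{k-1}$) is correct; it is essentially the paper's closing remark that $V_{p^{\ast}_{k-1}q^{\ast}_{k-1}}=\max_{(i,j)\in w^{\ast}(s^{\ast}_a,s^{\ast}_k)}V_{ij}$. The genuine gap is the ``crossing claim'': you state that every edge of $\mst$ leaving $t_k$ has weight at least $V_{p^{\ast}_{k-1}q^{\ast}_{k-1}}$, but you do not prove it --- you explicitly defer it to ``a careful case analysis'' that ``appears to be required.'' This claim is the entire content of the theorem's hard direction, so as written the proof is incomplete. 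Worse, the route you sketch for it is not clearly closable: invoking optimality at step $i<k-1$ against the alternative split of $T_i$ along $(p^{\ast}_{k-1},q^{\ast}_{k-1})$ yields only $V_{p^{\ast}_iq^{\ast}_i}-V_{s^{\ast}_{i+1}}>V_{p^{\ast}_{k-1}q^{\ast}_{k-1}}-\min_{j\in B'}V_j$, where $B'$ is the non-sink side of the alternative split; to conclude $V_{p^{\ast}_iq^{\ast}_i}>V_{p^{\ast}_{k-1}q^{\ast}_{k-1}}$ you would need $\min_{j\in B'}V_j\le V_{s^{\ast}_{i+1}}$, and there is no evident reason for this since $B'$ and $T_i''$ need not be comparable. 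You also restrict attention to MST crossing edges, whereas the set $C_k$ in Eq.~\eqref{ck} must exclude escape through \emph{any} graph edge leaving $t_k$ (the definition of a Freidlin cycle is phrased via MST paths, but the claim that $(p^{\ast}_{k-1},q^{\ast}_{k-1})$ is minimal in the whole cut-set is what the paper actually proves and uses).

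The paper closes this step with a single exchange argument that avoids your case analysis entirely: suppose some edge $(p,q)$ of the cut $(t_k,S\setminus t_k)$ satisfies $V_{pq}<V_{p^{\ast}_{k-1}q^{\ast}_{k-1}}$; by the strong cut optimality condition $(p,q)\in\mst$, and since $p^{\ast}_i,q^{\ast}_i$ lie in different components of $\mst_{k-1}$ for every $i<k-1$, the edge $(p,q)$ joins $t_k$ to a component of $g^{\ast}_{k}$ other than $t'$. Replacing $(p^{\ast}_{k-1},q^{\ast}_{k-1})$ by $(p,q)$ in $g^{\ast}_{k-1}$ produces a valid $W$-graph with $k-1$ sinks in which the edge sum strictly decreases while the sink sum does not increase (because $V_{s^{\ast}_b}\ge\min\{V_{s^{\ast}_k},V_{s^{\ast}_b}\}$ and $s^{\ast}_k$ is the potential minimum of $t_k$ by Observation~\ref{o1}), contradicting the optimality of $g^{\ast}_{k-1}$. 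This handles all crossing edges uniformly, with no reference to the step $i$ at which they were removed. I would recommend replacing your step-$i$ comparison with this exchange on $g^{\ast}_{k-1}$.
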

\begin{proof}
Let us consider the cut of the network partitioning the set of states $S$  as
$$S=\{i\in t_k\}\cup\{i\notin t_k\}.$$
Obviously, the edge $(p^{\ast}_{k-1},q^{\ast}_{k-1})$ belongs to the cut-set of this partition. 
We claim that the edge $(p^{\ast}_{k-1},q^{\ast}_{k-1})$ has the smallest value of the potential in this partition.
We proceed from converse. 
Suppose there is another edge $(p,q)$ in this cut-set such that $V_{pq}<V_{p^{\ast}_{k-1}q^{\ast}_{k-1}}$.
By the strong form of the cut optimality condition (see \cite{amo}, Section 13.3) $(p,q)$ belongs to the minimum spanning tree. Let us consider the $W$-graph
$g^{\star}_{k-1}$ that is obtained from $g^{\ast}_{k-1}$ by removing the edge $(p^{\ast}_{k-1},q^{\ast}_{k-1})$, adding the edge $(p,q)$,
and choosing the sinks properly.  Let $s^{\ast}_a$ and $s^{\ast}_b$ be the sinks of the connected components of the optimal $W$-graph 
$g^{\ast}_k$ adjacent to $t_k$ via the edges $(p^{\ast}_{k-1},q^{\ast}_{k-1})$ and $(p,q)$ respectively (see  Fig. \ref{fig:th4}).
\begin{figure}[htbp]
\begin{center}
\centerline{
\includegraphics[width=0.7\textwidth]{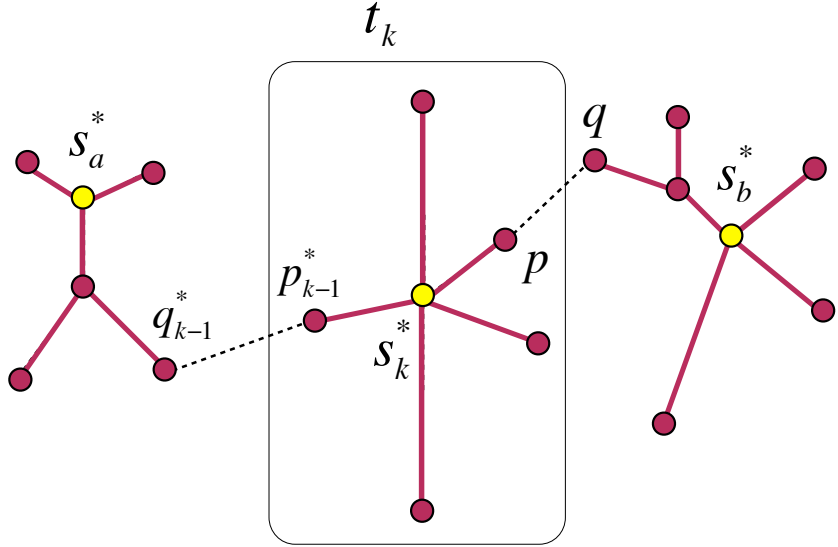}
}
\caption{Illustration for the proof of Theorem \ref{the4}.}
\label{fig:th4}
\end{center}
\end{figure}
Then the corresponding sinks of the $W$-graph $g^{\star}_{k-1}$ are $s_a^{\ast}$ and the one out of
$s^{\ast}_{k}$ and $s^{\ast}_b$ whose potential is smaller.
Since $V_{pq}<V_{p^{\ast}_{k-1}q^{\ast}_{k-1}}$ and $V_{s^{\ast}_b}\ge\min\{V_{s^{\ast}_{k}},V_{s^{\ast}_{b}}\}$,  
the sum in Eq. \eqref{vk2} for the $W$-graph $g^{\star}_{k-1}$ is smaller than the one for $g^{\ast}_{k-1}$. This contradicts to the
optimality of $g^{\ast}_{k-1}$. Therefore, the edge $(p^{\ast}_{k-1},q^{\ast}_{k-1})$ has the smallest value of the potential in the cut-set, i.e.,
$$
V_{p^{\ast}_{k-1}q^{\ast}_{k-1}}=\min_{p\in t_k,~q\notin t_k}V_{pq}.
$$
Therefore, the Freidlin's cycle containing $s^{\ast}_k$  and all other states $s$ such that 
$$
\max_{(i,j)\in w^{\ast}(s^{\ast}_k,s)}V_{ij}<V_{p^{\ast}_{k-1}q^{\ast}_{k-1}}
$$
belongs to $t_k$, i.e., it is the cycle $C_k$.

Next we observe that (see Fig. \ref{fig:th4})
$$
V_{p^{\ast}_{k-1}q^{\ast}_{k-1}}=\max_{(i,j)\in w^{\ast}(s^{\ast}_a,s^{\ast}_k)}V_{ij},
$$
and this maximum is unique by Assumption \ref{as1}. Hence, any larger Freidlin's cycle contains
 $s^{\ast}_a$ and $V_{s^{\ast}_a}<V_{s^{\ast}_{k}}$.
Therefore, the Freidlin's cycle $C_k$
is the largest cycle containing $s^{\ast}_k$ and not containing any state with a smaller value of the potential.
\end{proof}

\section{An algorithm for computing the asymptotic spectrum}
\label{sec:algorithms}
In this Section we propose an algorithm to compute the asymptotics for the spectrum of the generator matrix 
$L$ starting from its low lying part. 
Central to the algorithm are the barrier function $u$ and the escape function $v$ defined as follows.
\begin{definition}
\label{def:u}
Let $W^{\ast}\subset S$ be a subset of states in the stochastic network with pairwise rates of the form \eqref{eq1}.
The barrier function $u(i)$ for the given set  $W^{\ast}$ is defined as 
\begin{equation}
\label{fbar}
u(i)=
\min_{s^{\ast}\in W^{\ast}}\max_{(p,q)\in w^{\ast}(i,s^{\ast})}V_{pq}
,\quad i\in S,
\end{equation}
where 
$w^{\ast}(s^{\ast},i)$ is the unique path in the minimum spanning tree connecting the states  $s^{\ast}$ and $i$.
\end{definition}
\begin{definition}
\label{def:v}
Let $W^{\ast}\subset S$ be a subset of states in the stochastic network with pairwise rates of the form \eqref{eq1}.
The escape function $v(i)$  for the given set of sinks $W^{\ast}$ is defined as 
$$v(i)=u(i)-V_i,\quad i\in S.$$
\end{definition}

 The output of the algorithm is the set of numbers 
$$
\Delta_k:=V_{p^{\ast}_kq^{\ast}_k}-V_{s^{\ast}_{k+1}}
$$
and the sets $S_k$ determiniming the asymptotics of the eigenvalues and the eigenvectors respectively, and
 the Freidlin's cycles $C_k$.
This Algorithm is justified by Theorems \ref{the2}, \ref{the3} and \ref{the4}.\newline
{\bf Algorithm 1:} Calculation of the asymptotic spectrum\newline
{\bf Initialization} \\
Precompute the minimum spanning tree $\mathcal{T}^{\ast}$.
Remove all edges that do not belong to  $\mathcal{T}^{\ast}$.
Set 
\begin{align*}
&k=0;\\
&s_1^{\ast}=\arg\min_{i\in S}V_i;  \\
&u(s_1^{\ast})=0,\quad u(i)= \max_{(p,q)\in w^{\ast}(s^{\ast}_{1},i)}V_{pq}, \quad i\in S;\\
&v(s_1^{\ast})=0,\quad v(i)=u(i)-V_i,\quad i\in S;\\
&\mst_1=\mst;\\
&S_0\equiv C_0 = S,
\end{align*}
where $w^{\ast}(s_{1}^{\ast},i)$ is the unique path in $\mst_k$ 
connecting the states $s_1^{\ast}$ and $i$. \newline
{\bf For $k=1:n-1$ } 
\begin{enumerate}
\item Find the new sink $s_{k+1}^{\ast}=\arg\max_{i\in S}v(i)$.
\item Find the cutting edge $(p_{k}^{\ast},q_{k}^{\ast})$ in the path 
in $\mst_k$  connecting the new sink $s_{k+1}^{\ast}$ with
one of the existing  sinks:
$$
w^{\ast}=\{s_j^{\ast},\ldots,p_k^{\ast},q_k^{\ast},\ldots,s_{k+1}^{\ast}\},\quad j\in\{1,2,\ldots,k\},
$$  
such that $u(p^{\ast}_k)<u(s_{k+1}^{\ast})$ and $u(q^{\ast}_k)=u(s_{k+1}^{\ast})$. Set
$$\Delta_{k}=(V_{p_{k}^{\ast}q_{k}^{\ast}}-V_{s^{\ast}_{k+1}}).$$
\item Remove the cutting edge $(p_{k}^{\ast},q_{k}^{\ast})$, i.e., set  $\mst_{k+1}=\mst_k\backslash \{(p_{k}^{\ast},q_{k}^{\ast})\}$.
\item Set $u(s_{k+1}^{\ast})=0$; $v(s^{\ast}_{k+1})=0$. 
\item Set  $S_{k}$ to be  the collection of states in the connected component of 
$\mst_k$ containing the sink $s_{k+1}^{\ast}$. For all states $i\in S_k$ 
update the barrier function $u$ and the escape function $v$:
$$
u(i)= \min\left\{u(i), \max_{(p,q)\in w^{\ast}(s^{\ast}_{k+1},i)}V_{pq}\right\},\quad v(i)=\min\{v(i),u(i)-V_i\},
$$ 
where $w^{\ast}(s_{k+1}^{\ast},i)$ is the unique path in $\mst_k$ 
connecting the states $s_k^{\ast}$ and $i$. 
The sink $s^{\ast}_{k+1}$ and the set of states where the values of $u$ and $v$ have changed constitute the Freidlin's cycle $C_{k}$.

\end{enumerate}
{\bf end for} 

There exists a collection of greedy algorithms for finding the minimum spanning tree \cite{amo}.
We have used Kruskal's algorithm  \cite{kruskal,amo} whose  computational cost for a network  with $n$ states and $m$ edges 
is $O(m+n\log n)$ plus the time of sorting the edges \cite{amo}.

The initialization and Step 5 in the for-cycle is done using a recursive procedure in at most $n-k$ steps 
because the minimum spanning tree and its subgraphs contain no cycles.
Step 1 in the for-cycle is done using the heap sort whose cost is $\log (n-k)$.
Step 2, finding the cutting edge, requires at worst $l$ steps if the path $w^{\ast}(s^{\ast}_{k+1},s^{\ast}_k)$ 
consists of $l$ edges. Obviously, $l\le n-k$, and  typically $l\ll n-k$.

Therefore, the upper bound for the computational cost of the for-cycle is $O(n(n-1)+n\log n-n)=O(n^2- 2n +n\log n)$.

We remark that one can replace the for-cycle with the while-cycle in Algorithm 1 with the stopping criterion of the form
$V_{p^{\ast}_kq^{\ast}_k}-V_{s^{\ast}_{k+1}}<\Delta$. 

\begin{figure}[htbp]
\begin{center}
\centerline{
\includegraphics[width=\textwidth]{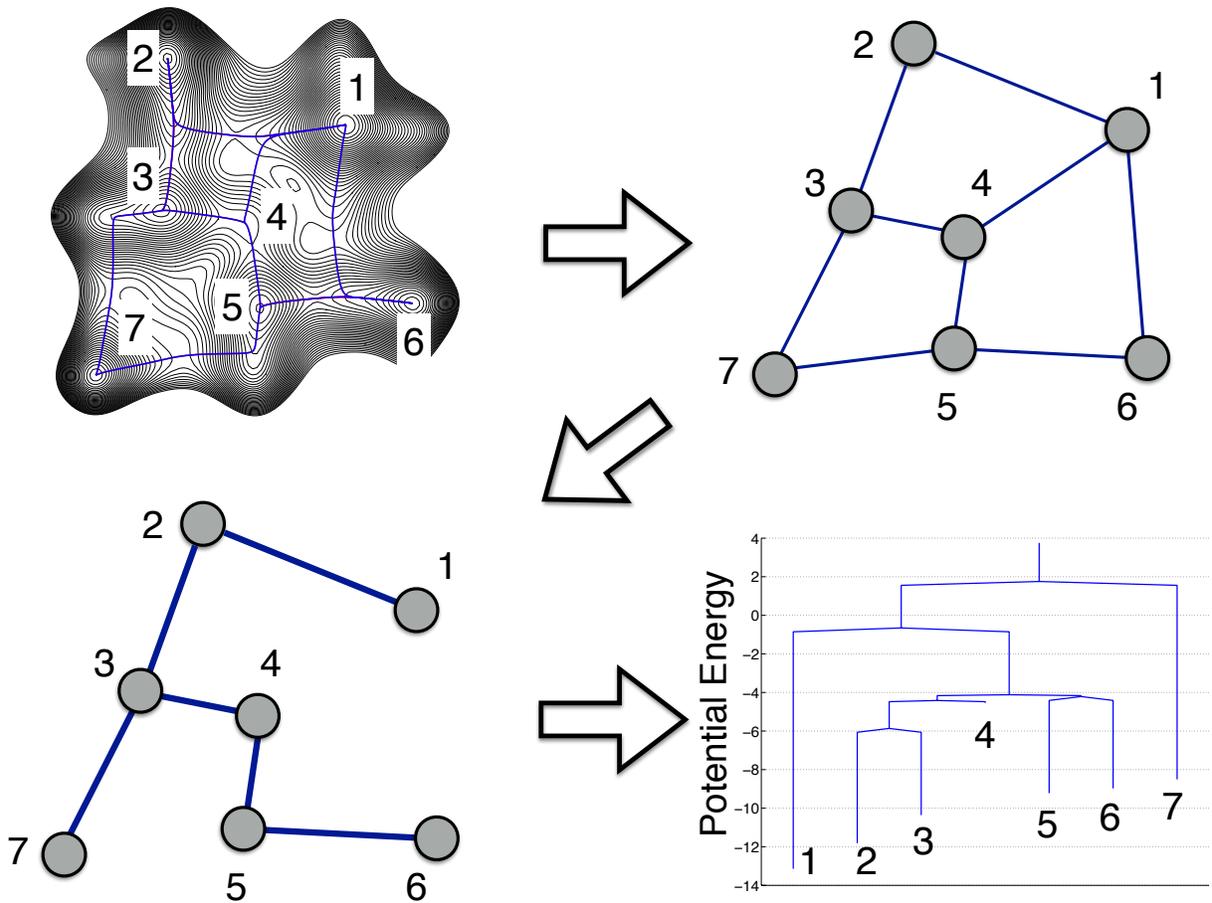}
}
\caption{Example: the seven-well potential. The potential energy landscape is converted into a stochastic network. 
Then the minimum spanning tree and 
the disconnectivity graph are built.}
\label{fig:7well1}
\end{center}
\end{figure}
We demonstrate how Algorithm 1 works on the example of the seven-well potential (Figures \ref{fig:7well1} and \ref{fig:7well2}).
The continuous potential energy landscape (Figure \ref{fig:7well1}, top left) is converted into a 
stochastic network with 7 states corresponding to the potential minima (Figure \ref{fig:7well1}, top right). 
A pair of states is connected by an edge if and
only if there exists a Minimum Energy Path (MEP) connecting them that does not pass through other minima. 
The resulting network 
contains 9 edges. The numbers $V_i$, $i=1,\ldots,7$, are the values of the potential at the corresponding minima.
The numbers $V_{ij}$, $i,j\in\{1,\ldots,7\}$, $i\neq j$, are the maximal values of the potential along 
the corresponding MEPs, i.e., 
the values of the potential at the corresponding saddles. Then we extract the minimum spanning tree 
(Figure \ref{fig:7well1}, bottom left) that can be easily converted 
into the disconnectivity graph (Figure \ref{fig:7well1}, bottom right). 

Since state 1 corresponds to the deepest minimum, we set $s^{\ast}_1=1$. 
The saddle separating minima 1 and 2 is higher than those separating 
minima 2, 3, 4, 5, and 6, but lower than the one separating all of them from minimum 7. 
The value function $u$ and the escape function $v$  are initialized as shown in Figure \ref{fig:7well2}, top left.  The set $S_0$ as well as 
Freidlin's cycle $C_0$ are always the whole set of states. Then the for-cycle at $k=1$ gives the following.
The maximum of $v$ is reached at state 2.  Hence state 2 becomes the new sink $s^{\ast}_2$. The cutting edge $(p_1^{\ast},q_1^{\ast})$ is the edge $(1,2)$.
We remove it from the network. Hence the set $S_1$ is $\{2,3,4,5,6,7\}$.
We update the functions $u$ and $v$ starting the computation from state 2. 
State 1 does not belong to the same connected component as the new sink 2,
therefore, $u(1)$ and $v(1)$ are not updated. 
State 7 belongs to the same connected component as state 2. 
However, since the highest barrier separating states 1 and 7 is the same as the one
separating states 2 and 7, the values $u(7)$ and $v(7)$ remain the same. At the rest of the states, both values $u(i)$ and $v(i)$ are updated.  
Hence the Freidlin's cycle is $C_1=\{2,3,4,5,6\}$ (Figure \ref{fig:7well2}, top middle).
\begin{figure}[htbp]
\begin{center}
\centerline{
\includegraphics[width=\textwidth]{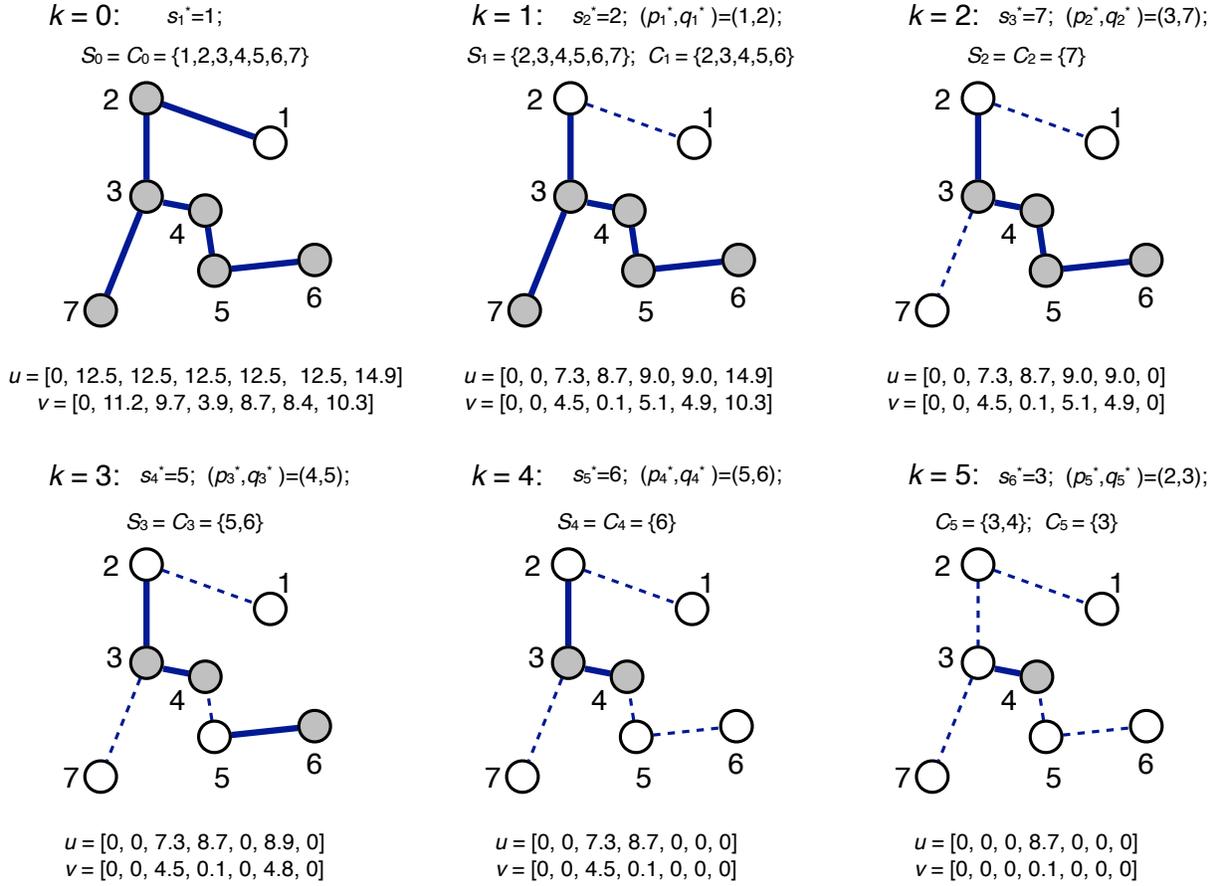}
}
\caption{Example: the application of Algorithm 1 to the stochastic network in Figure \ref{fig:7well1}. 
 The functions $u$ and $v$ are computed and 
then updates at every step. The sequences of 
the sinks $s^{\ast}_j$, the cutting edges $(p_j^{\ast},q_j^{\ast})$, and the corresponding Freidlin's cycles $C_{j}$ are built in the process.
}
\label{fig:7well2}
\end{center}
\end{figure}
Continuing in a similar manner for $k=2,3,\ldots$, we obtain the following sequences of sinks, cutting edges, sets $S_k$
and the corresponding Freidlin's cycles:
\begin{align*}
&s^{\ast}_1=1,\quad S_0 = C_0=\{1,2,3,4,5,6,7\},\\
&s^{\ast}_2=2,\quad(p_1^{\ast},q_1^{\ast})=(1,2),\quad S_1 = \{2,3,4,5,6,7\},\quad C_1=\{2,3,4,5,6\},\\
&s^{\ast}_3=7,\quad(p_2^{\ast},q_2^{\ast})=(3,7),\quad S_2 = C_2=\{7\},\\
&s^{\ast}_4=5,\quad(p_3^{\ast},q_3^{\ast})=(4,5),\quad S_3= C_3=\{5,6\},\\
&s^{\ast}_5=6,\quad(p_4^{\ast},q_4^{\ast})=(5,6),\quad S_4 = C_4=\{6\},\\
&s^{\ast}_6=3,\quad(p_5^{\ast},q_5^{\ast})=(2,3),\quad S_5 = \{3,4\},\quad C_5=\{3\},\\
&s^{\ast}_7=4,\quad(p_6^{\ast},q_6^{\ast})=(3,4),\quad S_6 = C_6=\{4\}.
\end{align*}
These sequences define the asymptotic eigenvalues and eigenvectors:
\begin{align*}
&\lambda_0=0,\quad \phi_0=[1,1,1,1,1,1,1]^T,\\
&\lambda_1\asymp \exp(-(V_{12}-V_2)/T),\quad \phi_1=[0,1,1,1,1,1,1]^T,\\
&\lambda_2\asymp \exp(-(V_{37}-V_7)/T),\quad \phi_2=[0,0,0,0,0,0,1]^T,\\
&\lambda_3\asymp \exp(-(V_{45}-V_5)/T),\quad \phi_3=[0,0,0,0,1,1,0]^T,\\
&\lambda_4\asymp \exp(-(V_{56}-V_6)/T),\quad \phi_4=[0,0,0,0,0,1,0]^T,\\
&\lambda_5\asymp \exp(-(V_{23}-V_3)/T),\quad \phi_5=[0,0,1,1,0,0,0]^T,\\
&\lambda_6\asymp \exp(-(V_{34}-V_4)/T),\quad \phi_6=[0,0,0,1,0,0,0]^T.
\end{align*}


\section{Application to  the Lennard-Jones-38 network}
\label{sec:examples}
The potential energy of a Lennard-Jones cluster LJ$_N$ is given by
\begin{equation}
\label{LJpe}
V(\mathbf{r}) = 4\epsilon\sum_{i<j}\left[\left(\frac{\sigma}{r_{ij} }\right)^{12} - \left(\frac{\sigma}{r_{ij} }\right)^{6}\right],
\end{equation}
where the numbers $r_{ij}=|\mathbf{r}_i-\mathbf{r}_j|$ are the pairwize distances between the atoms.
Throughout this work we will  use reduced units with $k_B=\epsilon=\sigma=1$.
The majority of global potential energy minima for Lennard-Jones clusters of various sizes are based on the icosahedral packing.
However, for some special numbers of atoms, Lennard-Jones clusters may admit a high symmetry configuration based on other packings 
\cite{wales-doye,wales38, wales_landscapes}.
The smallest  special number is 38. 
The  potential energy minimum of the $\lj38$ cluster is achieved at
 the face-centered cubic truncated octahedron with the point group $O_h$ (Fig. \ref{fcc_ico}).
The second lowest minimum is the icosahedral structure with the $C_{5v}$ point group  (Fig. \ref{fcc_ico}). 
For brevity we will refer to these configurations as FCC and ICO respectively. 
These two lowest minima are far disconnected in the configurational space. 
It was shown by Frank in 1950s \cite{frank} that as a monoatomic liquid cools, structures based on the icosahedral packing tend to appear.
However, in order to crystalize, the atoms should rearrange into a periodically-extendable structure, e.g., face-centered cubic.
\begin{figure}[htbp]
\begin{center}
\centerline{
\includegraphics[width=\textwidth]{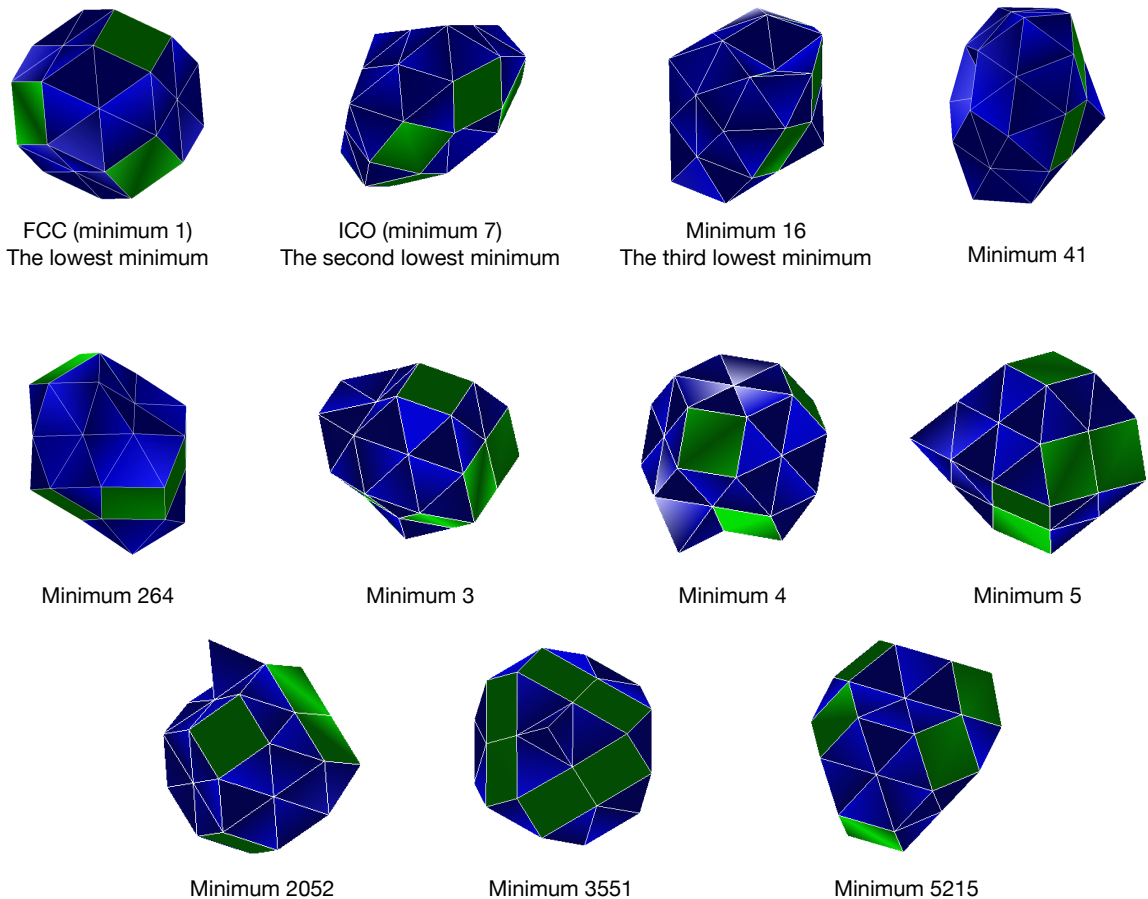}
}
\caption{Some important local minima of the potential energy of the LJ$_{38}$. 
}
\label{fcc_ico}
\end{center}
\end{figure}

Wales and collaborators developed an efficient technique for  conversion of potential energy landscapes
into stochastic networks whose states and edges correspond to local minima  
and transition states (saddles of Morse index one separating pairs of local minima) respectively \cite{wales38,wales_landscapes,wales_book}. 
The stochastic network associated with  $\lj38$ is publicly available via Wales's group web site \cite{wales_network}.
Its connected component containing 
FCC and ICO (minima 1 and 7 in Wales's list respectively) contains 71887 states and 119853 edges.
We will denote the states in the $\lj38$ network other than FCC and ICO by their index in Wales's list.

The problem of the $\lj38$ cluster rearrangement has attracted a lot of attention in the past fifteen years 
and has become a benchmark problem in chemical physics.
Many scientists attacked the problem of $\lj38$ rearrangement between its two lowest 
potential minima FCC and ICO using different tools. Wales analyzed the $\lj38$ network using the
Discrete Path Sampling \cite{wales0,wales1,wales_landscapes}.
The asymptotic zero-temperature path connecting FCC and ICO and the sub-hierarchy of Freidlin's cycles involved into the 
transition process was found in \cite{cam1}. 
A finite temperature analysis of the $\lj38$ network using the tools of the Transition Path Theory
was recently conducted in \cite{cve}.
The $\lj38$ cluster rearrangement in the continuous setting was also attacked by methods
that do not involved the exhaustive study of the energy landscape. These methods include
direct transition current sampling \cite{picciani}, 
molecular dynamics and temperature accelerated molecular dynamics \cite{voter}, and
parallel tempering \cite{neirotti}.  

The barrier separating FCC and ICO has the height of 4.219 and 3.543 energy units with respect to FCC and ICO respectively \cite{wales38}.
Typically, $\lj38$ is considered at low temperatures $0<T\ll 1$ as the solid-solid phase transition between face-centered cubic and
icosahedral structures takes place at $T=0.12$, the outer layer starts to melt at $T=0.18$, 
and the cluster melts completely at $T=0.35$ \cite{frantsuzov}. 
The barrier, separating ICO from FCC is about 30 $k_BT$ at $T=0.12$.
One might expect that the icosahedral basin with the deepest minimum ICO is, in some sense, 
a metastable subset of the $\lj38$ network. 
Our results show, however, that the situation is delicate. Whether to view the icosahedral basin as metastable or not
depends upon what definition of metastability is used and the observation time as well.

The graph of 
$
\Delta_k:=V_{p^{\ast}_kq^{\ast}_k}-V_{s^{\ast}_{k+1}}
$
versus $k$ for $k=1,\ldots,71886$ is shown in Fig. \ref{fig:delta}. Recall that $\lambda_k \asymp\exp(-\Delta_k/T)$. More or less notable 
gaps are present only between the first few barriers $\Delta_k$ corresponding to sinks with high potential evergy. These sinks are separated from
the rest of the states by very high potential barriers.
The eigenvalue corresponding to the sink ICO is $\lambda_{245}$.
There is no significant gap separating  $\Delta_{245}$:
 $\Delta_{246}-\Delta_{245}\approx0.0036$. This means that $\lambda_{245}\ll\lambda_{246}$ only for extremely low temperatures (at least, $T$ should be less than
0.0036).
\begin{figure}[htbp]
\begin{center}
\centerline{(a)
\includegraphics[width=\textwidth]{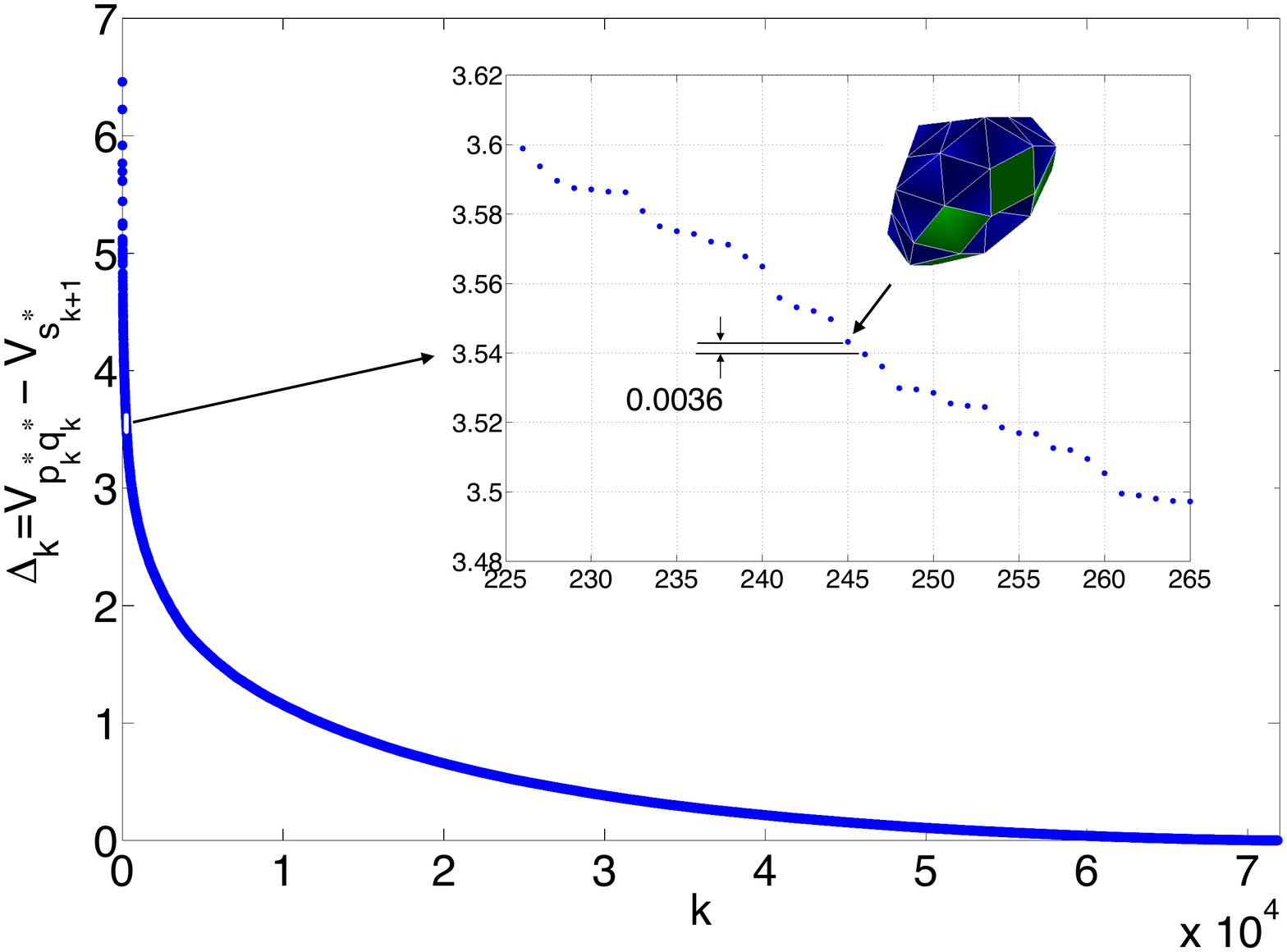}
}
\caption{
 The numbers $\Delta_k:=V_{p^{\ast}_{k-1}q^{\ast}_{k-1}}-V_{s^{\ast}_{k}}$ 
versus $k$ for the $\lj38$ network.
}
\label{fig:delta}
\end{center}
\end{figure}
The disconnectivity graph for the sinks from $s^{\ast}_1\equiv$FCC up to $s^{\ast}_{300}$ is shown in Fig. \ref{fig:dgraph300}.
ICO is the sink $s^{\ast}_{246}$. This graph shows that if the system is initially at ICO or FCC, it is extremely
unlikely for it to get to any other sink out of the first 300,
if the temperature $T<0.1$. 
Therefore, the sinks corresponding to the smallest eigenvalues are essentially irrelevant to the low-temperature dynamics.
This means that if the system is initially not in one of these states, and the observation time is not extremely long, 
it is unlikely for the system to reach those states. A relevant discussion can be found in \cite{wales2014}.

\begin{figure}[htbp]
\begin{center}
\centerline{
\includegraphics[width=\textwidth]{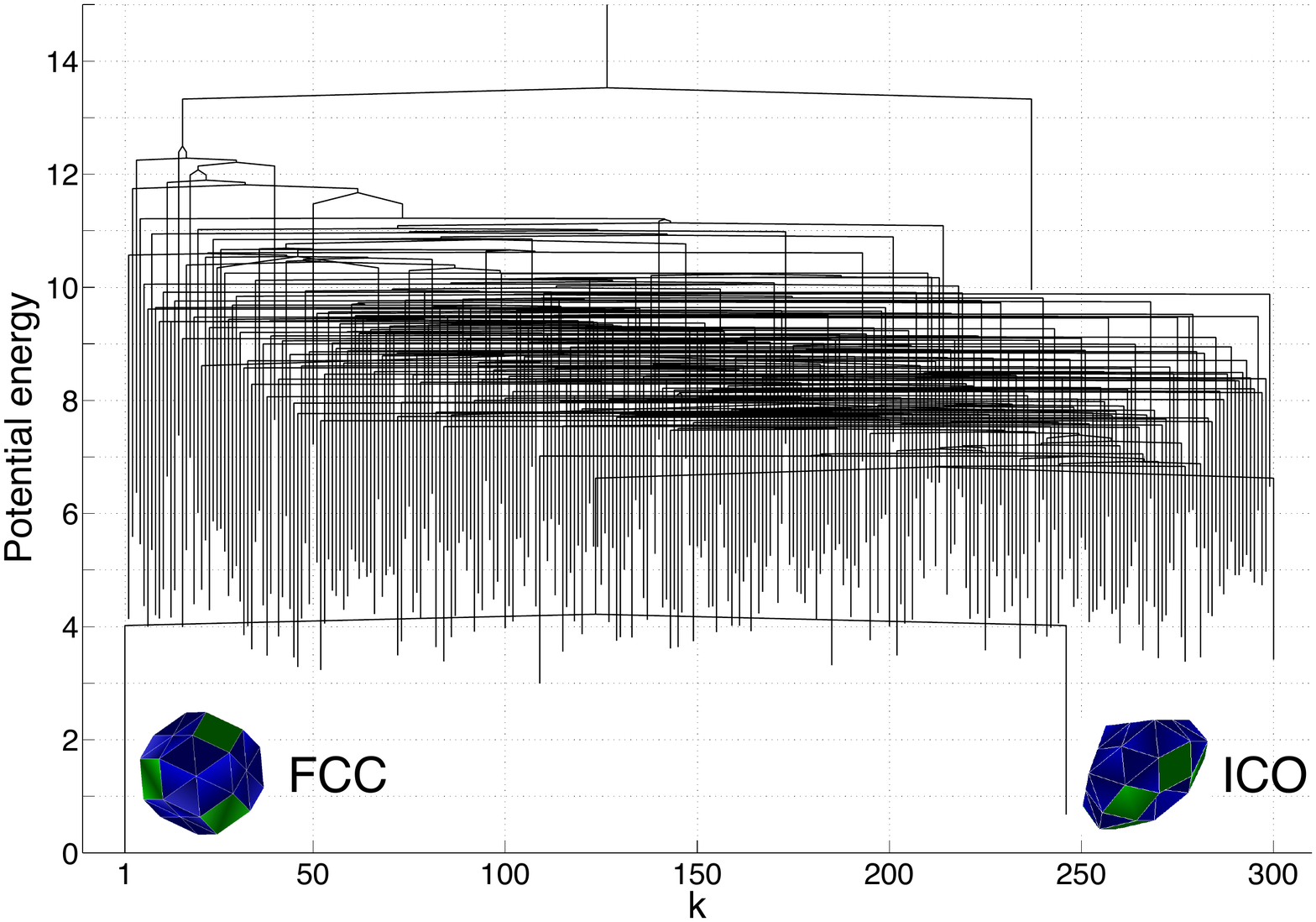}
}
\caption{
The disconnectivity graph showing the first 300 sinks of the $\lj38$ network.
FCC and ICO correspond to sinks 1 and 246 respectively. The states are ordered according to the
number of sink that they represent. The potential energy is shown relative to FCC.
}
\label{fig:dgraph300}
\end{center}
\end{figure}

Algorithm 1 also gives the collection of sets $S_k$ determining the asymptotic eigenvectors, and the corresponding Freidlin's cycles $C_k$.
A few largest disjoint sets $S_k$, $k\ge1$, are shown in Fig. \ref{fig:scheme}. The largest set $S_k$ for $k\ge 1$ is  $S_{245}$, the one which appears when the sink
corresponding to the second lowest minimum ICO is added.  It consists of 56290 states. Freidlin's cycle $C({\rm ICO})$ contains 791 states. This means that if the 
temperature is low enough and the system is initially at any state belonging to $S_{245}$, it relatively quickly gets to $C({\rm ICO})\subset S_{245}$ and stays there for relatively long time
$O(\exp(-\Delta_{245}/T))$ prior to exiting it. The other large disjoint sets $S_k$, $k\ge 1$, are $S_{6910}$ with 4252 states, the corresponding sink is minimum 3, and the corresponding Freidlin's
cycle contains 3 states; $S_{7482}$ with 1316 states, corresponding to minimum 4, and $|C(4)|=1$; $S_{5296}$ with 379 states,  corresponding to minimum 5, and $|C(5)|=2$; 
$S_{4143}$ with 990 states, corresponding to minimum 5215, and 
$|C(5215)| = 8$; $S_{11750}$ with 680 states,  corresponding to minimum 3551,  and $|C(3551)|=7$; and $S_{11961}$ with 1758 states,  corresponding to minimum 2052,  and $|C(2052)|=4$.
The relationship between these sets outlined in Fig. \ref{fig:scheme} is obtained using the algorithm for computing the asymptotic zero-temperature path introduced in \cite{cam1}. 
Besides the states belonging to one of the shown sets $S_k$,  there are 6221 more states (excluding FCC) in the $\lj38$ network that do not belong to any of the shown sets. 
The largest set $S_k$, $k\ge 1$ formed by  these remaining 6221 states is $S_{8009}$ with 288 states corresponding to minimum 587, and $C(587)$ consists of 2 states. 
The next largest disjoint sets $S_k$, $k\ge 1$, formed by the remaining states consist of 160,  98,   87,  79,  $\ldots$ states.  
Overall, the set of states in the $\lj38$ network can be decomposed into a disjoint union of the global minimum FCC and 2327 sets $S_k$. 
 Out of them, 1395 sets consist of a single state, 
406 consist of 2 states, 177 consist of 3 states, etc. The complete data about these disjoint sets $S_k$ are found in Table 1.
\begin{figure}[htbp]
\begin{center}
\centerline{
\includegraphics[width=\textwidth]{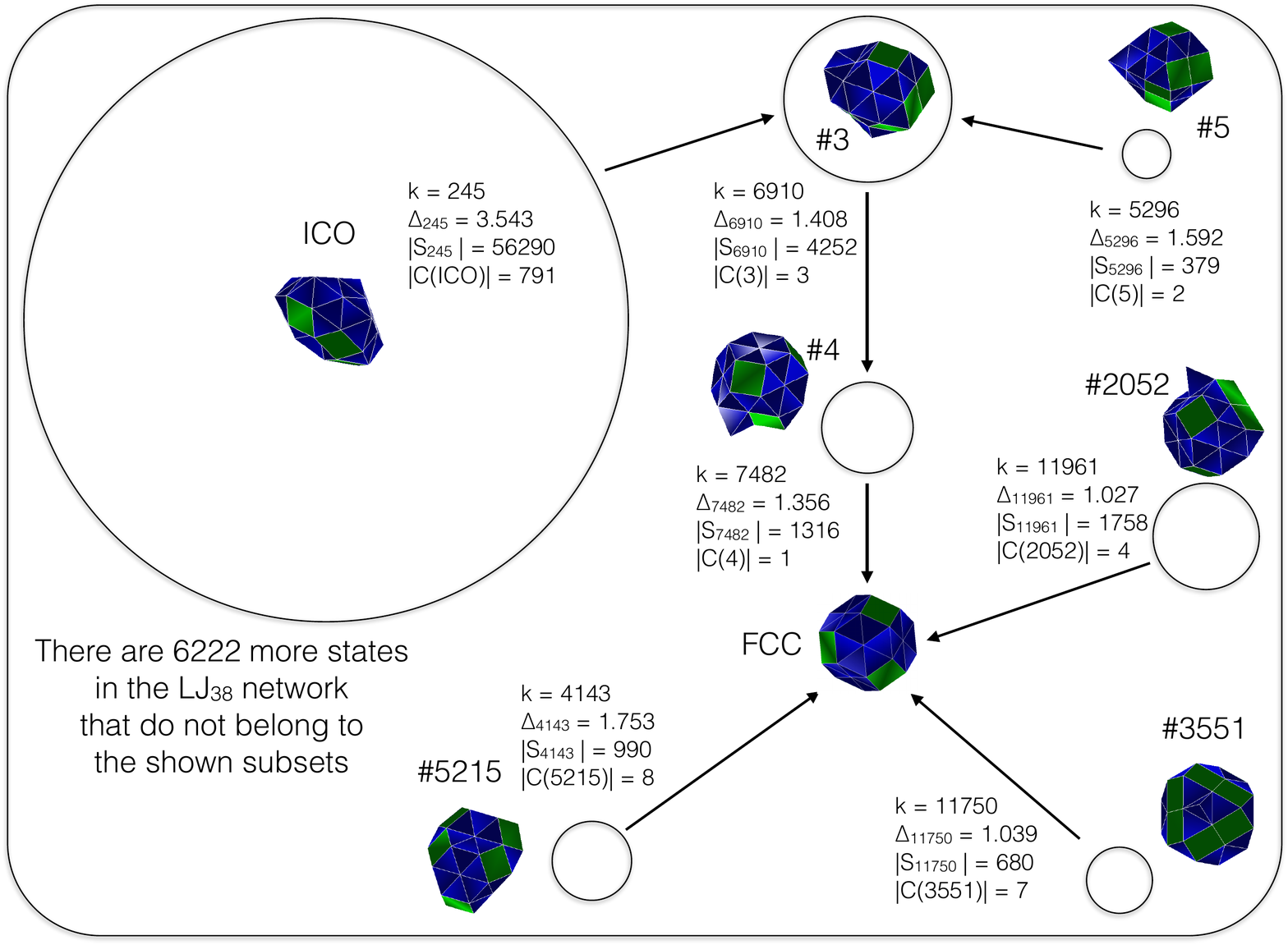}
}
\caption{
The largest disjoint sets $S_k$'s in the $\lj38$ network together with the  corresponding $\Delta_k$'s and Freidlin's cycles.
The area of the circles representing the sets $S_k$ is proportional to the number of states in them.
}
\label{fig:scheme}
\end{center}
\end{figure}

    \begin{table}
\label{table1}
\begin{center}
 \scalebox{0.7}{
   \begin{tabular}{ |c |c |c |}
    \hline \\
    $N$ & The \# of sets $S_k$ with $|S_k|=N$ & Sink(s) \\  \hline
     56290 & 1 & ICO\\
     4252 & 1 & 3\\
     1758 & 1 & 2052\\
     1316 & 1 & 4\\
     990 & 1 & 5215\\
     680 & 1 & 3551\\
    379 & 1 & 5 \\
     288 & 1 & 587\\
    160 & 1 & 5429\\
     98 & 1 & 2295\\
     87 & 1 &  9087\\
    79 & 1 & 4305\\
    66 & 1 & 3552\\
    54 & 1 & 7746\\
     49 & 1 & 30562\\
    47 & 1 &  13165\\
    45 & 1 & 407 \\
    40 & 1  & 17251\\
    36 & 1 & 3074\\
  33 & 1 & 4065\\
     28 &3  & 45155,  77289, 85766\\
    27 & 1 & 3191\\
    25 & 3 & 3863, 32036, 75247\\
    24 & 1& 85341 \\
     23 & 1 & 6757 \\
   21 & 2 & 2, 6070\\
   20 & 1 & 4066\\
   18 & 1 & 11218 \\
   17 & 3 & 18648, 36425, 39076\\
   16 & 4 & 16545, 24258, 33579, 79028\\
   15 & 5 & 11238, 29369, 59335, 70722, 94195\\
   14 & 1 & 9833\\
   13 &  3 & 13287, 35221, 51978\\
   12 & 10 &\\
   11 & 11 & \\
   10 & 14 &\\
   9 & 18&\\
   8 & 21 &\\
   7 & 27 &\\
   6 & 47 &\\
   5 & 55 &\\
   4 & 97 &\\
   3 & 177 &\\
   2 & 406 &\\
   1 & 1395 &\\
\hline
    \end{tabular}
    }
\caption{The sizes of disjoint sets $S_k$ constituting the set of states of the $\lj38$ network together with FCC.
The indicator functions of the sets $S_k$ are asymptotic eigenvectors.}
\end{center}
\end{table}

Fig. \ref{fig:dgraph300} suggests that some of the sets $S_k$ are separated by high potential barriers from the global potential minimum FCC.
This fact motivates us to restrict our attention to the part of the $\lj38$ network that is accessible from FCC at low temperatures if the 
observation time is large but not very large. We take the decomposition of the $\lj38$ network into the disjoint union of FCC and 2327 sets $S_k$ and select only those $S_k$'s
that are separated from FCC by a barrier whose height does not exceed 5 relative to $V_{\rm FCC}$  (i.e., for these $S_k$'s,  $V_{p^{\ast}_kq^{\ast}_k}-V_{\rm FCC}<5$
or $V_{p^{\ast}_kq^{\ast}_k}<-168.928$). All 60 such sets $S_k$, $k\ge 1$, are listed in Table 2.  
Table 2 shows that there is a significant spectral gap for the truncated and factored $\lj38$ network:  $\Delta_{245}-\Delta_{4143}=1.790$.
The truncated and factored minimum spanning tree for the $\lj38$ network formed by these selected sets and FCC in Fig. \ref{fig:red}
is calculated using the algorithm introduced in \cite{cam1}.
Lumping the states into disjoint sets $S_k$ can be helpful for comparison with electron microscopy or 
diffraction experiments since large collection of states  \cite{wales_network} 
based on icosahedral packing (395 states, states 6 through 400) is indistinguishable from low resolution experimental data. 
Similarly, states 1 -- 5 \cite{wales_network}
based on face-centered cubic packing are also indistinguishable.  Therefore, for a careful comparison, even further lumping may be done.
We leave this problem for the future. 
\begin{figure}[htbp]
\begin{center}
\centerline{
\includegraphics[width=\textwidth]{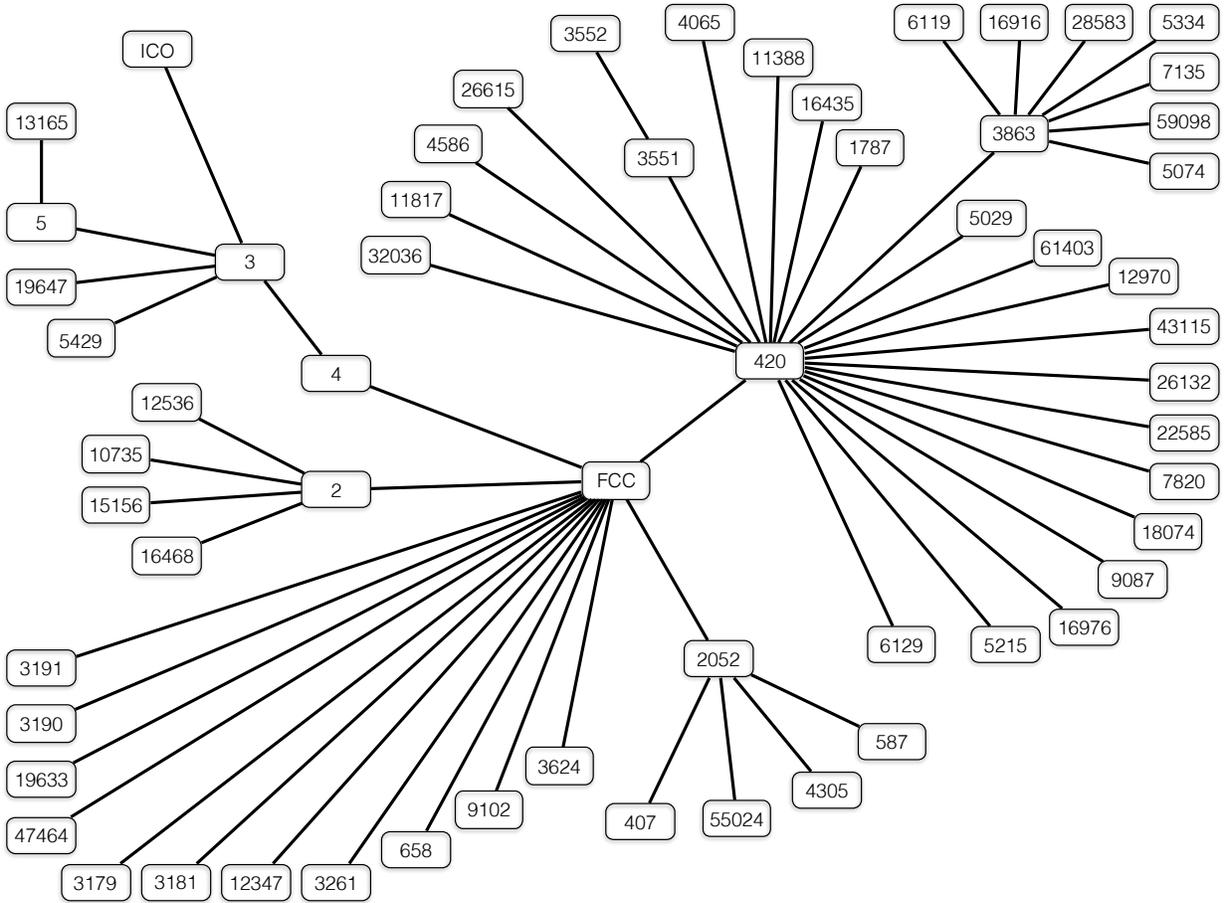}
}
\caption{
The truncated and factored minimum spanning tree for the $\lj38$ network.
}
\label{fig:red}
\end{center}
\end{figure}

    \begin{table}
\label{table2}
\begin{center}
\scalebox{0.7}{
    \begin{tabular}{ |c |c |c | c|c|c|}
    \hline \\
    $k$ & Sink & $V_{p^{\ast}_kq^{\ast}_k}$ & $\Delta_k$ & $|C_k|$ & $ |S_k|$ \\  &&&&\\ \hline

245 &	ICO &	4.219269e+00 &	3.543221e+00 &	791 &	56290\\ 
4143 &	5215 &	4.171512e+00 &	1.753054e+00 &	8 &	990\\ 
4342 &	2295 &	4.875641e+00 &	1.722202e+00 &	4 &	98\\ 
4609 &	13165 &	4.837103e+00 &	1.683919e+00 &	3 &	47\\ 
5296 &	5 &	3.880840e+00 &	1.592507e+00 &	2 &	379\\ 
5804 &	19647 &	4.840889e+00 &	1.528352e+00 &	1 &	1\\ 
6038 &	5429 &	4.778529e+00 &	1.502450e+00 &	11 &	160\\ 
6521 &	3552 &	4.812691e+00 &	1.450600e+00 &	2 &	66\\ 
6910 &	3 &	3.763385e+00 &	1.408780e+00 &	3 &	4252\\ 
7482 &	4 &	3.429287e+00 &	1.356882e+00 &	1 &	1316\\ 
7659 &	9087 &	4.686362e+00 &	1.338609e+00 &	7 &	87\\ 
7675 &	4065 &	4.952008e+00 &	1.337357e+00 &	1 &	33\\ 
7823 &	407 &	4.864080e+00 &	1.325427e+00 &	2 &	45\\ 
8010 &	587 &	4.210932e+00 &	1.309406e+00 &	2 &	288\\ 
8231 &	4305 &	4.793179e+00 &	1.289844e+00 &	2 &	79\\ 
8451 &	26615 &	4.595571e+00 &	1.270736e+00 &	1 &	11\\ 
8498 &	32036 &	4.929577e+00 &	1.266567e+00 &	3 &	25\\ 
8693 &	55024 &	4.718209e+00 &	1.251325e+00 &	1 &	1\\ 
9464 &	19633 &	4.823696e+00 &	1.192426e+00 &	1 &	7\\ 
10136 &	12536 &	3.921984e+00 &	1.145851e+00 &	1 &	12\\ 
10833 &	1787 &	4.823821e+00 &	1.100551e+00 &	1 &	12\\ 
10999 &	43115 &	4.866834e+00 &	1.089582e+00 &	1 &	1\\ 
11355 &	61403 &	4.845165e+00 &	1.063956e+00 &	2 &	3\\ 
11750 &	3551 &	3.830233e+00 &	1.039356e+00 &	7 &	680\\ 
11961 &	2052 &	3.913145e+00 &	1.026976e+00 &	4 &	1758\\ 
12917 &	3624 &	4.715649e+00 &	9.728158e-01 &	2 &	6\\ 
14327 &	59098 &	4.457973e+00 &	8.977427e-01 &	1 &	1\\ 
16694 &	47464 &	4.617778e+00 &	7.857551e-01 &	2 &	3\\ 
19098 &	5074 &	4.788369e+00 &	6.897715e-01 &	2 &	9\\ 
20834 &	16468 &	4.918987e+00 &	6.277720e-01 &	1 &	1\\ 
22168 &	3190 &	4.871516e+00 &	5.837694e-01 &	3 &	3\\ 
22544 &	28583 &	4.775726e+00 &	5.725636e-01 &	2 &	6\\ 
24715 &	10735 &	3.882704e+00 &	5.094120e-01 &	1 &	1\\ 
24967 &	3191 &	3.652424e+00 &	5.030321e-01 &	1 &	27\\ 
25642 &	22585 &	4.686717e+00 &	4.854972e-01 &	1 &	1\\ 
27507 &	6119 &	4.665559e+00 &	4.395695e-01 &	1 &	1\\ 
27508 &	7135 &	4.779884e+00 &	4.395485e-01 &	1 &	3\\ 
27907 &	11388 &	4.783819e+00 &	4.305184e-01 &	1 &	1\\ 
29151 &	16976 &	4.575356e+00 &	4.012544e-01 &	1 &	2\\ 
29477 &	5029 &	3.631126e+00 &	3.941418e-01 &	1 &	3\\ 
31771 &	12970 &	4.339322e+00 &	3.465081e-01 &	2 &	2\\ 
32961 &	16916 &	4.449395e+00 &	3.233989e-01 &	1 &	1\\ 
34118 &	6129 &	4.579289e+00 &	3.027489e-01 &	1 &	1\\ 
35518 &	15156 &	4.969974e+00 &	2.795456e-01 &	1 &	2\\ 
38928 &	2 &	2.399623e+00 &	2.292819e-01 &	1 &	21\\ 
39857 &	5334 &	4.362974e+00 &	2.169966e-01 &	1 &	1\\ 
39872 &	3863 &	3.308673e+00 &	2.167377e-01 &	4 &	25\\ 
40647 &	3261 &	4.595683e+00 &	2.063268e-01 &	1 &	3\\ 
42847 &	16435 &	4.329995e+00 &	1.793789e-01 &	1 &	1\\ 
44417 &	7820 &	4.176016e+00 &	1.617021e-01 &	1 &	4\\ 
45846 &	18074 &	4.159332e+00 &	1.463683e-01 &	1 &	1\\ 
47271 &	26132 &	4.109060e+00 &	1.321669e-01 &	2 &	4\\ 
50106 &	658 &	4.116532e+00 &	1.061901e-01 &	1 &	9\\ 
54440 &	12347 &	4.885824e+00 &	7.330549e-02 &	1 &	1\\ 
58491 &	4586 &	4.861846e+00 &	4.776641e-02 &	1 &	3\\ 
59154 &	9102 &	4.999152e+00 &	4.385418e-02 &	1 &	1\\ 
59683 &	3181 &	4.566481e+00 &	4.115680e-02 &	1 &	11\\ 
61752 &	11817 &	4.893556e+00 &	3.039805e-02 &	1 &	1\\ 
64175 &	420 &	3.007773e+00 &	2.020888e-02 &	1 &	3\\ 
69069 &	3179 &	4.922959e+00 &	4.457720e-03 &	1 &	1\\ 
\hline
    \end{tabular}
    }
\caption{The data for the truncated and factored $\lj38$ network. }
\end{center}
\end{table}
The size distribution of Freidlin's cycles is
presented in Table 3. Naturally, $C_0\equiv C({\rm FCC})$ contains all 71887 states. The second largest 
Freidlin's cycle is $C({\rm ICO})$ containing 791 states. 
The third largest cycle with 45 states corresponds to the third deepest minimum (minimum 16) (Fig. \ref{fcc_ico}).
Note that $C(16)\subset C({\rm ICO})\subset S_{245}$.
About 84\% of Freidlin's cycles $C_k$  consist of single states.
This is the result of the fact that the states in the $\lj38$ network are separated by relatively high barriers.
Therefore, one cannot significantly factor the dynamics of the 
$\lj38$ network by decomposing it into a disjoint union of Freidlin's cycles.
    \begin{table}
\label{table3}
\begin{center}
\scalebox{0.7}{
    \begin{tabular}{ |c |c |}
    \hline \\
    $N$ & The \# of states with $|C(i)|=N$ \\  \hline
     71887 & 1\\
     791 & 1\\
     45 & 1\\
     31 & 1\\
     23 & 1\\
     20 & 1\\
    19 & 2\\
     18 & 1\\
    17 & 2\\
     16 & 3\\
     15 & 9\\
    14 & 9\\
    13 & 7\\
    12 & 16\\
     11 & 12\\
    10 & 34\\
    9 & 41\\
    8 & 79\\
    7 & 132\\
 6 & 228\\
     5 & 389\\
    4& 843\\
    3 & 2108\\
    2 & 6990\\
     1 & 60973\\
\hline
    \end{tabular}
    }
\caption{The distribution of sizes of Freidlin's cycles $C(i)$, $i\in S$ for the $\lj38$ network.}
\end{center}
\end{table}

Now we return to the question whether the Freidlin's cycle $C_{{\rm ICO}}$ can be viewed as a metastable set at the range of temperatures 
$0<T<0.12$ (the solid-solid phase transition critical temperature is $T=0.12$).
The definition given by Bovier in \cite{bovier1} and adjusted to our notations and terminology sounds as follows.
\begin{definition}
\label{def:met}
A Markov process defined on a network with the set of states $S$ is metastable with respect to the subset  $\mathcal{M}\subset S$,
if 
\begin{equation}
\label{met}
\frac{\inf_{s\in \mathcal{M}}\mathbb{E}_{s}[\tau_{ \mathcal{M}\backslash s}]}
{\sup_{i\notin \mathcal{M}}\mathbb{E}_{i}[\tau_{\mathcal{M}}]}\ge \frac{1}{\rho}\gg 1,
\end{equation}
where $\mathbb{E}_j[\tau_A]$ denotes the expected hitting time of the subset $A\subset S$ for the process starting at a state $j$. 
\end{definition}
The states in $\mathcal{M}$ are representative states of metastable sets.
Definition \ref{def:met} treats metastability as a way to factor the dynamics. It says that
a system is metastable if one can find a subset of states $\mathcal{M}$ 
such that the expected time to reach from any state in $\mathcal{M}$  another state in $\mathcal{M}$
is much larger than the expected time to reach from any state not in $\mathcal{M}$ one of the states in $\mathcal{M}$.
We remark that the set $\mathcal{M}$ can be chosen to be the subset of sinks $\{s^{\ast}_k\}_{k=1}^{K}$.
In our case, if Eq. \eqref{met} holds then there exists a spectral gap
$$0<\lambda_1\le\ldots\le\lambda_{K-1}\ll\lambda_{K}\le\ldots\le \lambda_{n-1}.$$
Apparently, there is no significant spectral gap for the $\lj38$ network near $\lambda_{245}$ unless $T<0.0036$, i.e., extremely low.
Therefore, the full $\lj38$ network\footnotemark[1] with 71887 states and infinite observation time is not metastable in the sense of 
the definition of Bovier and collaborators unless the temperature is extremely low.

\footnotetext[1]{Actually, Wales's group created a more complete $\lj38$ network with over a million of local minima. 
Only its part containing  the lowest 10$^5$ local minima is available at \cite{wales_network},
but it is sufficient for modeling the low-temperature dynamics.
}

Now let us look just at 
the numbers $\Delta_k$ corresponding to the states belonging to the Freidlin's cycle $C({\rm ICO})$.
They are protted separately versus $k$ in  Fig. \ref{fig:dico}. The gap between $\Delta({\rm ICO})\equiv\Delta_{245}$ and the second largest $\Delta$
which is $\Delta(264)\equiv\Delta_{1379}$ is more than 1. 
This fact encourages us to consider the definition of metastability introduced by Schuette and collaborators
in the context of general diffusion processes \cite{schuette03,schuette04}.
Their definition relates metastability with ergodicity.
Adjusted for stochastic networks with detailed balance it becomes
\begin{definition}
\label{def:met2}
Let $s\in S$ be a state of a stochastic network with pairwise rates of the form of Eq. \eqref{eq1}. 
The Freidlin's cycle $C(s)$ (the largest Freidlin's cycle containing $s$ and not containing any state with a smaller potential value)
is metastable with exit rate $\lambda(s)$ if for any state $i\in C(s)\backslash\{s\}$  the exit rate $\lambda(i)$ from the Freidlin's cycle $C(i)$
satisfies
\begin{equation}
\label{met2}
\lambda(i)\gg\lambda(s).
\end{equation}
\end{definition}
The graph in Fig. \ref{fig:dico} shows eloquently that the Freidlin's cycle $C({\rm ICO})$
is metastable in the sense of Definition \ref{def:met2}.
\begin{figure}[htbp]
\begin{center}
\centerline{
\includegraphics[width=\textwidth]{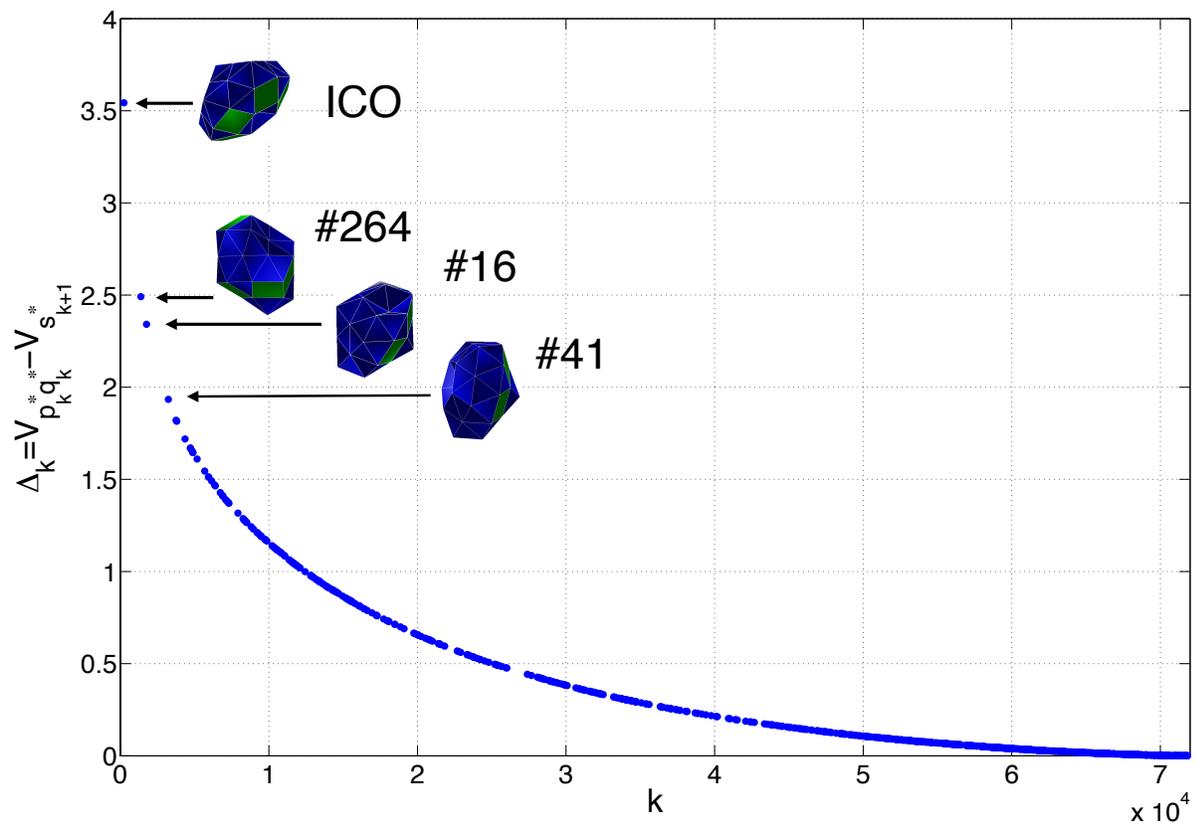}
}
\caption{
The numbers $\Delta_k:=V_{p^{\ast}_{k}q^{\ast}_{k}}-V_{s^{\ast}_{k+1}}$ 
versus $k$ corresponding to the states belonging to the Freidlin's cycle $C({\rm ICO})$.
}
\label{fig:dico}
\end{center}
\end{figure}
In order to visualize the structure of the metastable state $C({\rm ICO})$ we have extracted all of the sinks 
(ordered according to the magnitude of the corresponding eigenvalue)
lying in $C({\rm ICO})$ and plotted a disconnectivity graph for the fisrt 20 of them. 
We have also included the sink corresponding to FCC
(see Fig. \ref{fig:dgraph7}). 
The first four sinks in this substructure are ICO, minimum 264 in Wales's list \cite{wales_network},
the third lowest minimum (minimum 16), and minimum 41. 
These four minima correspond to those eigenvalues of the reduced $\lj38$ network separated by spectral gaps from the rest.
It is apparent from the disconnectivity graph that minimum 264 is separated from ICO by almost as high
barrier as the one separating ICO and FCC.  Freidlin's cycle $C(264)$ consists of 4 states.
\begin{figure}[htbp]
\begin{center}
\centerline{
\includegraphics[width=\textwidth]{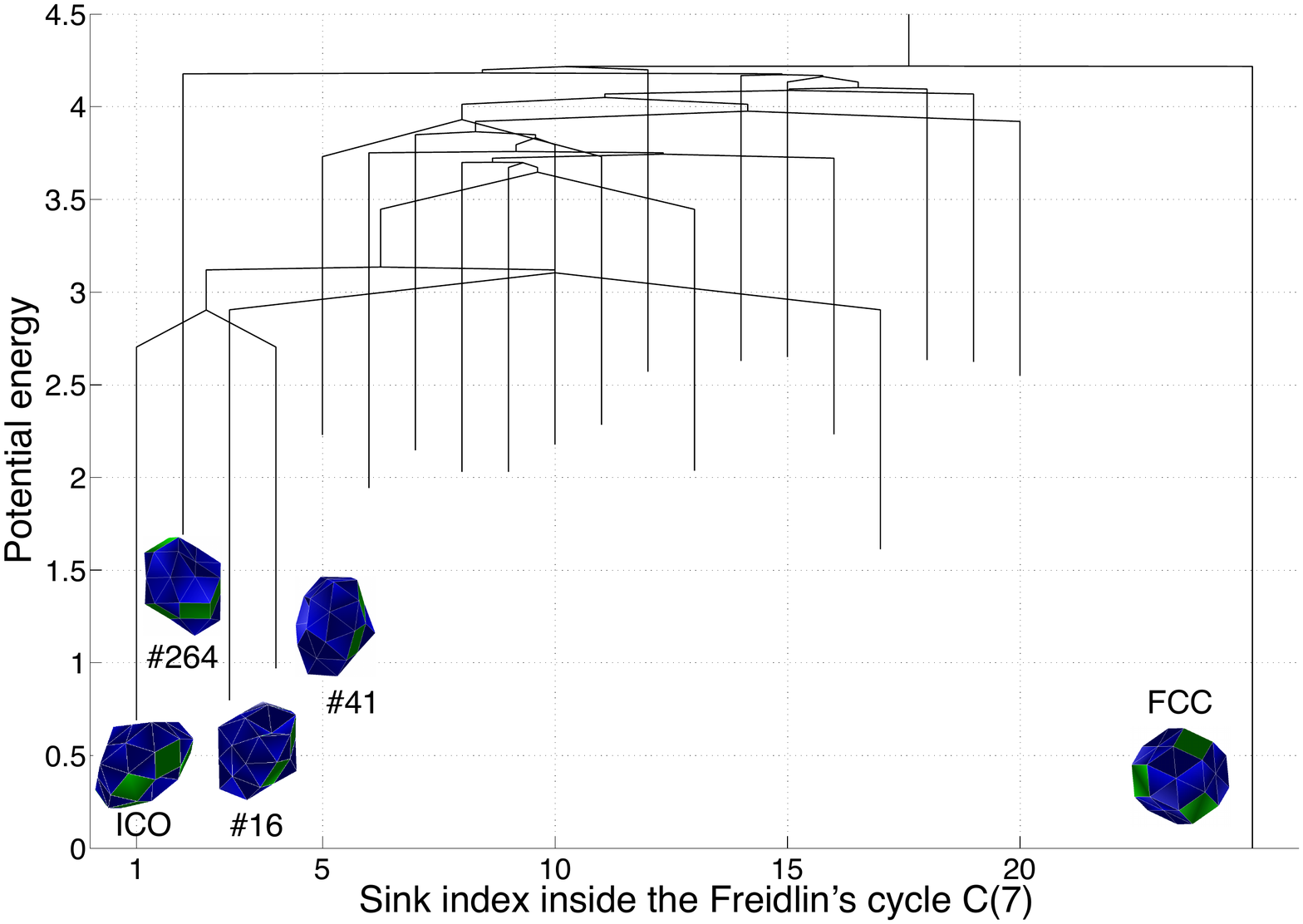}
}
\caption{
The disconnectivity graph showing the first 20 sinks belonging to the Freidlin's cycle $C({\rm ICO})$.
The states are ordered in the increasing order of the sink inside  $C({\rm ICO})$. The potential energy is shown relative to FCC.
}
\label{fig:dgraph7}
\end{center}
\end{figure}

Finally, we perform one more experiment with the $\lj38$ network.
Instead of lumping together states constituting disjoint sets $S_k$ 
and the putting a cap on the highest admissible potential barrier, we
simply truncate the $\lj38$ network without any lumping.
Exactly,
we remove all edges $(i,j)$ with $V_{ij}> 6.0+V_{\rm FCC}$ 
and take the connected component of the resulting network containing FCC and ICO.
It consists of 30520 states and 71750 edges.
This  cut off is equivalent to limiting the observation time.
The graph of the first 100 $\Delta_k$ is shown in Fig. \ref{fig:delta6}. 
There are notable gaps in $\Delta$'s. These differences are 
$\Delta_1-\Delta_2\approx0.19$, $\Delta_2-\Delta_3\approx0.46$, $\Delta_3-\Delta_4\approx0.11$, and $\Delta_4-\Delta_5\approx0.15$.
The other differences are significanly smaller. 
The first eigenvalue $\lambda_1$ is smaller than $\lambda_2$ by the factor of at least 10 if 
the temperature $T< 0.083$.
All four first eigenvalues are separated by gaps 
of at least of the factor of 10 if the temperature $T<0.047$. 
Therefore, the truncated $\lj38$ network is metastable with respect to ICO and FCC  in the sense of Definition \ref{def:met} if $T<0.083$.
It is metastable in the sense of Definition \ref{def:met} with respect to five metastable points, FCC, ICO, 
and the ones corresponding to $\Delta_2$, $\Delta_3$ and $\Delta_4$ in Fig. \ref{fig:delta6} 
(minima 223, 21450 and 7583 in Wales's list \cite{wales_network})
if $T<0.047$.
The disconnectivity graph showing the first 101 sinks of the reduced $\lj38$ network is shown in Fig. \ref{fig:dgraph6}.
\begin{figure}[htbp]
\begin{center}
\centerline{
\includegraphics[width=\textwidth]{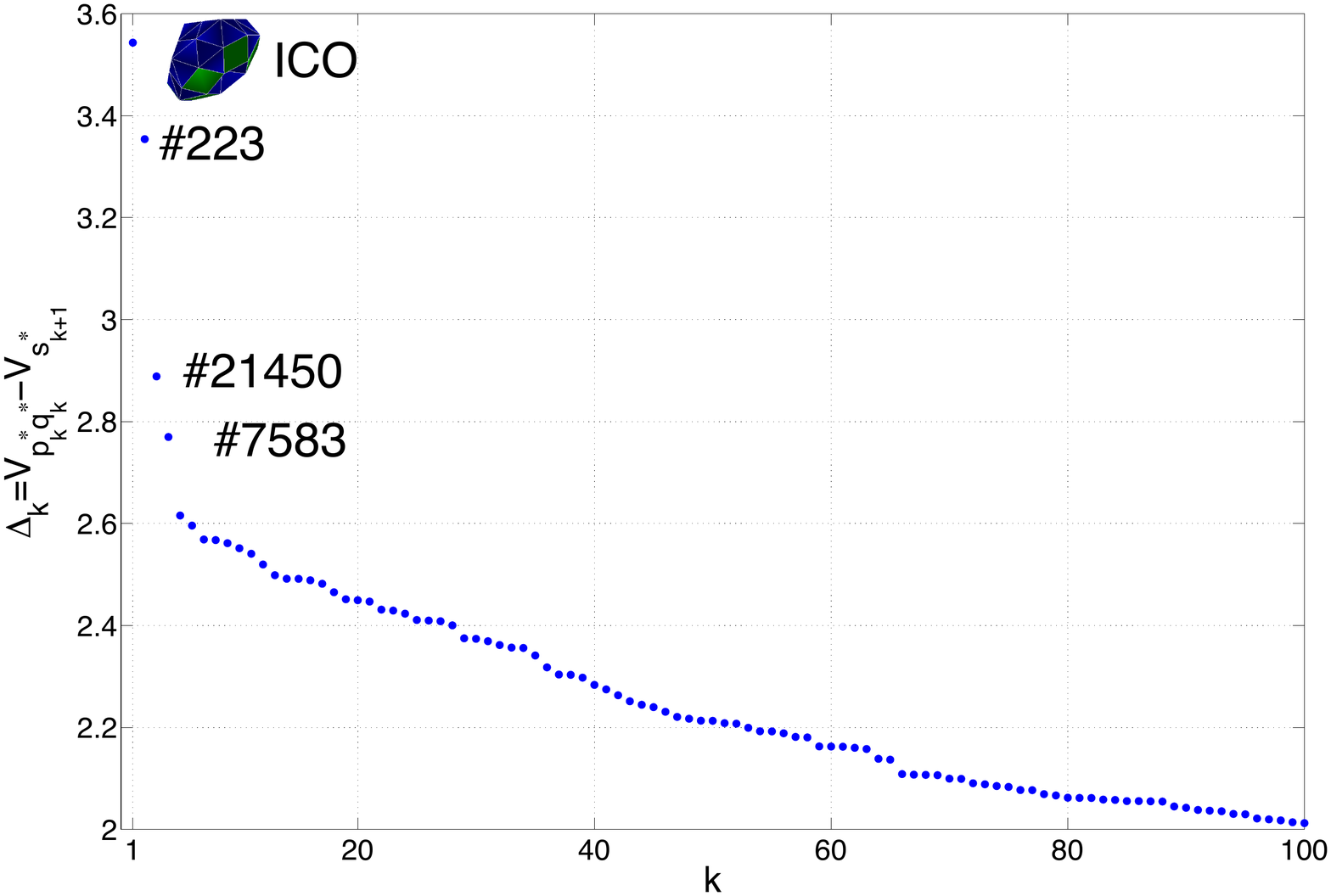}
}
\caption{
The numbers $\Delta_k:=V_{p^{\ast}_{k}q^{\ast}_{k}}-V_{s^{\ast}_{k+1}}$ 
versus $k$ for the reduced $\lj38$ network. Only the first 100 $\Delta_k$'s are shown.
}
\label{fig:delta6}
\end{center}
\end{figure}
\begin{figure}[htbp]
\begin{center}
\centerline{
\includegraphics[width=\textwidth]{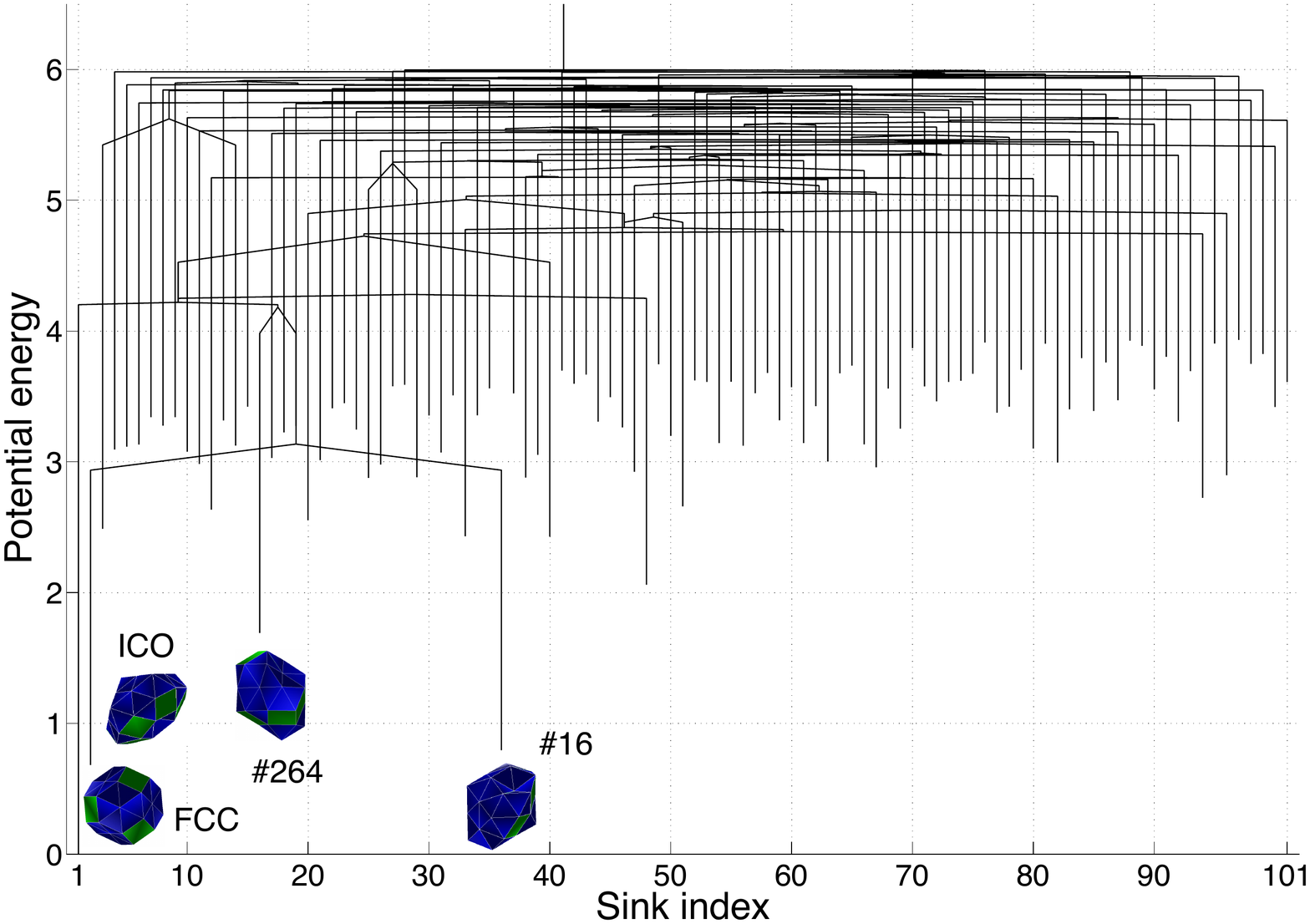}
}
\caption{
The disconnectivity graph showing the first 101 sinks of the truncated $\lj38$ network.
The states are ordered according to the
number of sink that they represent in the truncated network. The potential energy is shown relative to FCC.
}
\label{fig:dgraph6}
\end{center}
\end{figure}

\section{Conclusion}
\label{sec:conclusion}
In this work we have considered stochastic networks representing potential energy landscapes.
We have established a connection between the optimal $W$-graphs 
determining the asymptotics of the eigenvalues \cite{wentzell1,wentzell2,f-w} 
and the minimum spanning tree for the edge cost equal to the potential at the corresponding saddle.
We have proven the nested property of the optimal forests
corresponding to the optimal $W$-graphs, i.e., $\mst_{k+1}\subset\mst_k$, $k=1,2,\ldots,n$, and
established recurrence relationships allowing us to construct the optimal forests 
and calculate the asymptotics for the eigenvalues and the eigenvectors.
We have reconciled  Wentzell's formulas, the optimal $W$-graphs, 
Freidlin's cycles and sharp estimates for the low lying spectra by Bovier and collaborators in our construction.

Relying on our theoretical results (Theorems \ref{the2} -\ref{the4}), we have proposed an efficient algorithm  
for computing the asymptotic spectrum starting from the smallest eigenvalues in the absolute value. 
In the nutshell, this algorithm is  a procedure for cutting the minimum spanning tree in a certain order. It is extremely robust and
suitable for complex networks with large numbers of states and edges that do not have to possess any special structural properties
other than the genericness assumption (Assumption \ref{as1}).

We have applied this algorithm to Wales's Lennard-Jones-38  network \cite{wales_network}.
Since the energy landscape of the $\lj38$ has a double-funnel structure, one could expect that 
the $\lj38$ network should have a spectral gap  separating the eigenvalue corresponding to the transition from
the larger and shallower icosahedral funnel to the deeper and narrower face-centered cubic funnel  from the rest.
However, our results demonstrate that this is not the case for the full $\lj38$ network available at \cite{wales_network}.
The aforementioned eigenvalue has number $245$ in the ordered list and it is not separated from the rest by a notable spectral gap.
On the other hand, 
the sinks corresponding to the smallest eigenvalues are essentially irrelevant to the low temperature dynamics.
If the system is initially at the global minimum FCC, the temperature is low, and the observation time is not very large, these high-lying states will 
be extremely unlikely to observe during an experiment or a simulation. 
Putting a cap on the highest barrier separating states from FCC which is equivalent to limiting the observation time
and/or lumping together sets of states, we can obtain a notable spectral gap.
Furthermore, without any capping or lumping, Freidlin's cycle $C({\rm ICO})$  is metastable according to Definition \ref{def:met2} related to ergodicity.

Spectral analysis suggests a way to factor the network-in-hand. We have demonstrated how this can be done for the $\lj38$ network.
The decomposition of the network into disjoint sets $S_k$ (whose indicator functions are subset of the asymptotic eigenvectors)
is helpful for simplification and visualization of low-temperature dynamics.
It also might  be helpful for comparison with experiment, a problem that we leave for the future.

\section{Acknowledgements}
I am grateful to Prof. E. Vanden-Eijnden for making me interested in the spectral problem 
and for valuable discussions.
I thank Prof. M. Freidlin for a valuable discussion.
I thank Prof. D. Wales for referring me to his $\lj38$ network and valuable discussion.
This work is supported by the DARPA YFA Grant N66001-12-1-4220 and
the NSF grant  1217118.


\appendix
\section{Proof of Lemma \ref{lemma1}}
\begin{proof}
First we prove Claim $(ii)$.
We will proceed from converse. Let us assume that the edge $(p_1^{\ast},q_1^{\ast})$ belongs to the optimal graph $g^{\ast}_k$ for some $k\in\{3,\ldots,n-1\}$.
Then one can replace $(p_1^{\ast},q_1^{\ast})$ with another edge $(p,q)$ not in $g^{\ast}_k$  and possibly pick another sink 
so that the sum over the edges and sinks  in Eq. \eqref{vk2}  decreases. I.e., if $g_k$ is the $W$-graph obtained as a result of these replacements,
and $\mathcal{T}_k$ and $W_k$ are the corresponding tree and the set of sinks of $g_k$, then
$$\sum_{(i,j)\in \mathcal{T}_k} V_{ij}+\sum_{i\in W_k} V_i<\sum_{(i,j)\in \mst_k} V_{ij}+\sum_{i\in W^{\ast}_k} V_i.$$

There is no single recipe for the choice of the edge $(p,q)$. 
We will have to consider several cases. 
Let  $w^{\ast}_{12}:=w^{\ast}(s_1^{\ast},s_2^{\ast})$ 
be the unique path in the minimum spanning tree
$\mst$  connecting the sinks $s_1^{\ast}$ and $s_2^{\ast}$ of the optimal $W$-graph $g^{\ast}_2$. 
The edge  $(p_1^{\ast},q_1^{\ast})$ must belong to  $w^{\ast}_{12}$,  as $s^{\ast}_1$ and $s_2^{\ast}$ 
belong to different connected components of $g^{\ast}_2$. Without the loss of generality we assume that 
$$w^{\ast}_{12}=\{s^{\ast}_1,~\ldots,p_1^{\ast},~q_1^{\ast},~\ldots,~s^{\ast}_2\}.$$
We observe that 
\begin{equation}
\label{vmax}
V_{p_1^{\ast}q_1^{\ast}}=\max_{(p,q)\in w^{\ast}_{12}}V_{pq},
\end{equation}
as otherwise we get a contradiction with Eq. \eqref{pqs2}.
By Assumption \ref{as1} the maximum in Eq. \eqref{vmax} is reached at the unique edge $(p_1^{\ast},q_1^{\ast})$.

Further we will need the following definition. Let us consider the $W$-graph $\hat{g}$ obtained from $g^{\ast}_k$ by
removing the edge $(p_1^{\ast},q_1^{\ast})$ and adjusting the directions of the edges so that the  sink of each connected component of $\hat{g}$ 
is the state with minimal potential in it. Then for any state $i$,  $sink(i)$ is the sink of the connected component of $\hat{g}$ 
containing $i$. 
We will consider five cases:
\begin{description}
\item[Case A] All edges of the path $w^{\ast}_{12}$ belong to $g^{\ast}_k$.
\item[Case B] There is an edge in $w^{\ast}_{12}$ not belonging to $g^{\ast}_k$.
\begin{description}
\item[Case B.1] $V_{sink(p_1^{\ast}) }\le V_{sink(q_1^{\ast}) }$
\begin{description}
\item[Case B.1.1] There is an edge $(p,q)\in w^{\ast}(q^{\ast}_1,s_2^{\ast})\subset w^{\ast}_{12}$ such that $(p,q)\notin \mst_k$.
\item[Case B.1.2] The whole path $w^{\ast}(q^{\ast}_1,s_2^{\ast})$ belongs to $g^{\ast}_k$.
\end{description}
\item[Case B.2] $V_{sink(p_1^{\ast}) }> V_{sink(q_1^{\ast}) }$.
\begin{description}
\item[Case B.2.1] There is an edge $(p,q)\in w^{\ast}(s^{\ast}_1,p_1^{\ast})\subset w^{\ast}_{12}$ such that $(p,q)\notin \mst_k$.
\item[Case B.2.2] The whole path $w^{\ast}(s^{\ast}_1,p_1^{\ast})$ belongs to $\mst_k$.
\end{description}
\end{description}
\end{description}
Cases A, B.1.1, B.1.2, and B.2.1 are illustrated in Fig. \ref{fig:the3}. Case B.2.2 is impossible.
Indeed, if the whole path  $w^{\ast}(s^{\ast}_1,p_1^{\ast})$ belongs to  $g^{\ast}_k$ then the states  $p_1^{\ast}$ and $s_1^{\ast}$
belong to the same connected component of $g_k^{\ast}$. Hence $sink(p_1^{\ast})  = s_1$, the state with the minimal potential in the whole network. 
This contradicts to the assumption that $V_{sink(p_1^{\ast}) }>V_{sink(q_1^{\ast}) }$. 
\begin{figure}[htbp]
\begin{center}
\centerline{
(a)\includegraphics[width=\textwidth]{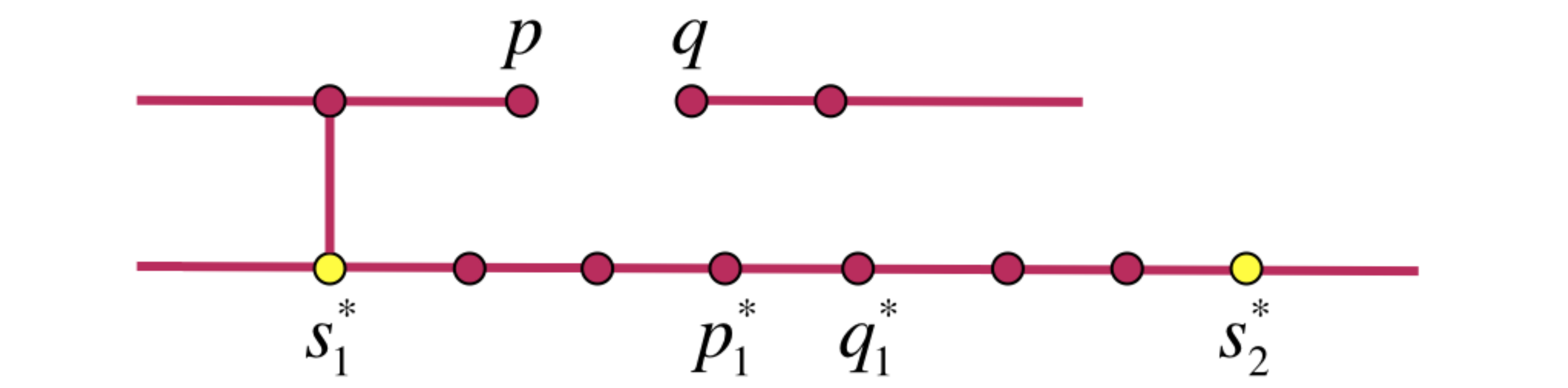}}
\vspace{5mm}
\centerline{
(b)\includegraphics[width=\textwidth]{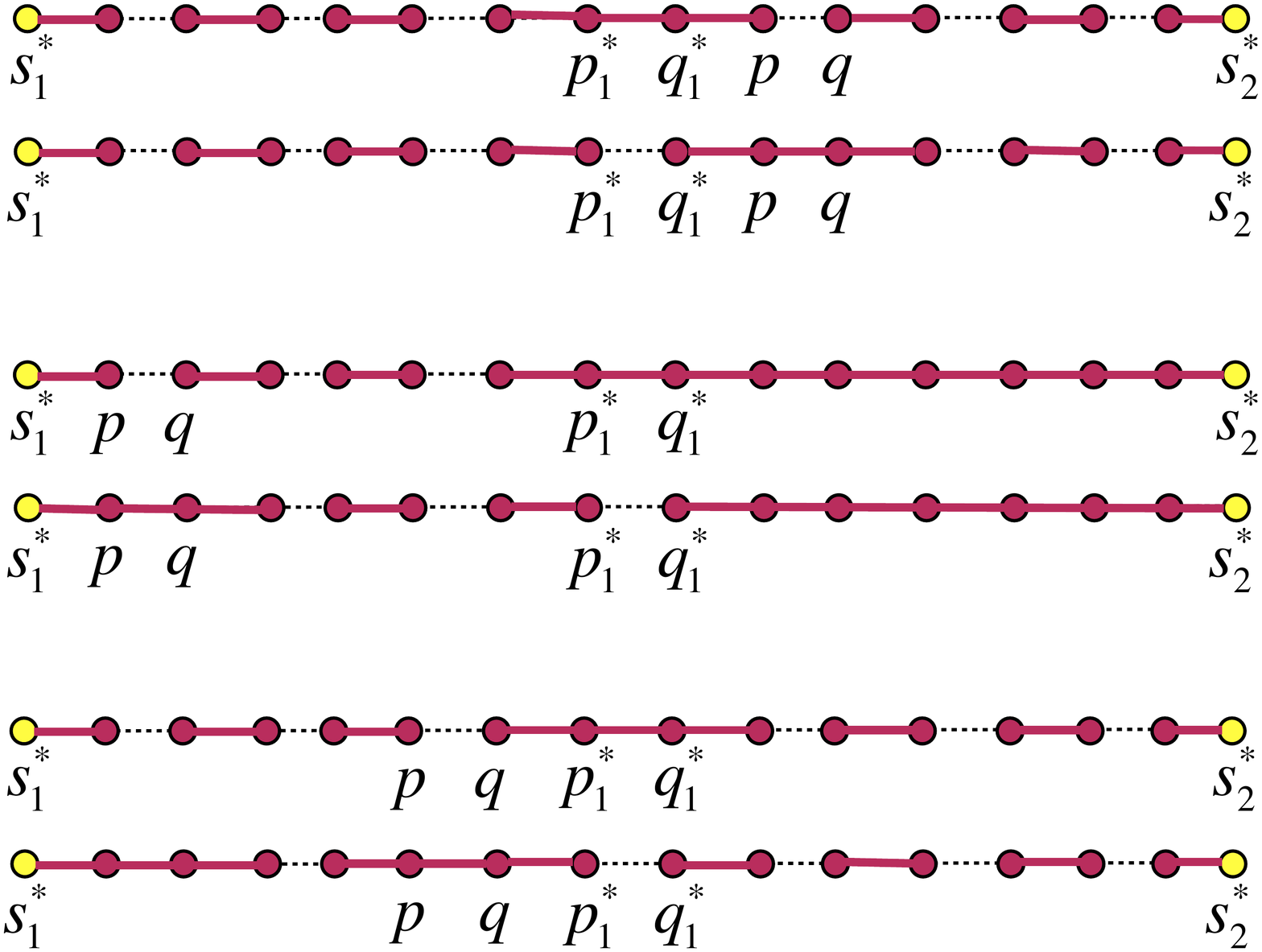}}
\caption{Illustration for the proof of Theorem \ref{the3}. (a): Case A. (b): Case B. Top: Case B.1.1. Center: Case B.1.2. Bottom: Case B.2.1 }
\label{fig:the3}
\end{center}
\end{figure}

Now we will explain how to choose the edge $(p,q)$ in each of the cases A, B.1.1, B.1.2, and B.2.1.
\begin{description}
\item[Case A] {
Since the $W$-graph $g^{\ast}_k$ is not connected, there is an edge  $(p,q)\in\mst$ such that $(p,q)\notin g^{\ast}_k$ 
and $p$ belongs to the connected component of $g^{\ast}_k$ containing the path $w^{\ast}_{12}$ (see Fig. \ref{fig:the3}(a)). 
Replacing the edge $(p_1^{\ast},q_1^{\ast})$ with the edge $(p,q)$
and choosing $s_2^{\ast}$ to be $sink(q_1^{\ast})$,
we transform the optimal $W$-graph $g^{\ast}_k$ into another $W$-graph $g_k$. By the assumption that $g^{\ast}_k$ is optimal we have
\begin{equation}
 V_{p_1^{\ast}q_1^{\ast}}+V_{sink(q)}-(V_{pq}+V_{s^{\ast}_2})<0,\quad{\rm i.e.}\quad  
V_{p_1^{\ast}q_1^{\ast}}-V_{s^{\ast}_2} < V_{pq}-V_{sink(q)}. 
\end{equation}
The inequalities above are strict by Assumption \ref{as1}.
This contradicts to the definition of $(p_1^{\ast},q_1^{\ast})$  and $s^{\ast}_2$ given by Eq. \eqref{pqs2}.
Hence the $W$-graph $g^{\ast}_k$ is not optimal.
}
\item[Case B.1.1]{
In this case, there is an edge $(p,q)\in w^{\ast}(q^{\ast}_1,s_2^{\ast})$ such that $(p,q)\notin g^{\ast}_k$ 
but $w^{\ast}(q_1^{\ast},p)\in g^{\ast}_k$ (see Fig. \ref{fig:the3}(b), Top). 
Replacing the edge $(p_1^{\ast},q_1^{\ast})$ with the edge $(p,q)$
and properly choosing sinks, 
we transform the optimal graph $g^{\ast}_k$ into another $W$-graph $g_k$. 
By the assumption that $g^{\ast}_k$ is optimal we have
\begin{align}
&V_{p_1^{\ast}q_1^{\ast}}+\min\{V_{sink(p_1^{\ast}) },V_{sink(q_1^{\ast}) }\} + V_{sink(q)} \notag\\
& -(V_{pq}+V_{sink(p_1^{\ast}) }+\min\{V_{sink(q_1^{\ast}) },V_{sink(q)}\})\le 0.
\end{align}
By assumption, $V_{sink(p_1^{\ast}) }\le V_{sink(q_1^{\ast}) }$. Hence $\min\{V_{sink(p_1^{\ast}) },V_{sink(q_1^{\ast}) }\} = V_{sink(p_1^{\ast}) }$. Therefore,
\begin{equation}
V_{p_1^{\ast}q_1^{\ast}}+V_{sink(q)} < V_{pq}+\min\{V_{sink(q_1^{\ast}) },V_{sink(q)}\}). 
\end{equation}
The inequality above is strict by Assumption \ref{as1}.
Noting that $V_{sink(q)}\ge \min\{V_{sink(q_1^{\ast}) },V_{sink(q)}\}$
we conclude that $V_{p_1^{\ast}q_1^{\ast}}<V_{pq}$. 
This contradicts to the fact that that $V_{p_1^{\ast}q_1^{\ast}}=\max_{(i,j)\in w^{\ast}_{12}}V_{ij}$ (see Eq. \eqref{vmax}).
Hence $g^{\ast}_k$ is not optimal.

}
\item[Case B.1.2]{
In this case, there is an edge $(p,q)\in w^{\ast}(s^{\ast}_1,p_1^{\ast})$ such that $(p,q)\notin g^{\ast}_k$ but $w(a,p)\in g^{\ast}_k$ (see Fig. \ref{fig:the3}(b), Center). 
Replacing the edge $(p_1^{\ast},q_1^{\ast})$ with the edge $(p,q)$
and choosing $s_2^{\ast}$ to be  $sink(q_1^{\ast})$, 
we transform the optimal graph $g^{\ast}_k$ into another $W$-graph $g_k$. 
By the assumption that $g^{\ast}_k$ is optimal we have
\begin{equation}
V_{p_1^{\ast}q_1^{\ast}}+V_{sink(q)} +\min\{V_{sink(p_1^{\ast}) },V_{sink(q_1^{\ast}) }\} 
 - (V_{pq}+V_{sink(p_1^{\ast}) } +V_{s^{\ast}_2})\le 0.
\end{equation}
By assumption, $V_{sink(p_1^{\ast}) }\le V_{sink(q_1^{\ast}) }$. Hence $\min\{V_{sink(p_1^{\ast}) },V_{sink(q_1^{\ast}) }\} = V_{sink(p_1^{\ast}) }$. 
Therefore,
\begin{equation}
V_{p_1^{\ast}q_1^{\ast}} - V_{s^{\ast}_2} <  V_{pq}+V_{sink( q)}. 
\end{equation}
The inequality above is strict by Assumption \ref{as1}.
This contradicts to the definition of $(p_1^{\ast},q_1^{\ast})$  and $s^{\ast}_2$ given by Eq. \eqref{pqs2}.
Hence $g^{\ast}_k$ is not optimal.
}
\item[Case B.2.1]{
In this case, there is an edge $(p,q)\in w^{\ast}(s^{\ast}_1,p_1^{\ast})$ such that $(p,q)\notin g^{\ast}_k$ but $w^{\ast}(q,p_1^{\ast})\in g^{\ast}_k$ 
(see Fig. \ref{fig:the3}(b), Bottom). 
Replacing the edge $(p_1^{\ast},q_1^{\ast})$ with the edge $(p,q)$
and properly choosing sinks, 
we transform the optimal graph $g^{\ast}_k$ into another $W$-graph $g_k$. 
By the assumption that $g^{\ast}_k$ is optimal we have
\begin{align}
& V_{p_1^{\ast}q_1^{\ast}}+V_{sink(p )} +\min\{V_{sink(p_1^{\ast}) },V_{sink(q_1^{\ast}) }\} \notag \\
& - (V_{pq}+V_{sink(q_1^{\ast}) } +\min\{V_{sink(p )},V_{sink(p_1^{\ast}) }\})\le 0.
\end{align}
By assumption, $V_{sink(p_1^{\ast}) }> V_{sink(q_1^{\ast}) }$, hence $\min\{V_{sink(p_1^{\ast}) },V_{sink(q_1^{\ast}) }\} = V_{sink(q_1^{\ast}) }$. Therefore,
\begin{equation}
V_{p_1^{\ast}q_1^{\ast}}+V_{sink(p )} < V_{pq}+\min\{V_{sink(p )},V_{sink(p_1^{\ast}) }\}. 
\end{equation}
The inequality above is strict by Assumption \ref{as1}.
Noting that $V_{sink(p )}\ge \min\{V_{sink(p_1^{\ast}) },V_{sink(p )}\}$
we conclude that $V_{p_1^{\ast}q_1^{\ast}}<V_{pq}$. 
This contradicts to the fact that that $V_{p_1^{\ast}q_1^{\ast}}=\max_{(i,j)\in w^{\ast}_{12}}V_{ij}$ (see Eq. \eqref{vmax}).
Hence $g^{\ast}_k$ is not optimal.
}
\end{description}
Now we prove Claim $(iii)$. 
Since the edge $(p_1^{\ast},q_1^{\ast})$ does not belong to $g^{\ast}_k$, $k=2,3,\ldots,n$, the states $s^{\ast}_1$ and $s^{\ast}_2$ 
belong to different connected components of the graphs $g^{\ast}_k$, $k=2,3,\ldots,n$. 
By Observation \ref{o1}, the state $s^{\ast}_2$ has the smallest value of the potential in its connected component of $g^{\ast}_2$.
Since the connected components of $g^{\ast}_k$, $k=3,\ldots,n$
containing the state $s^{\ast}_2$ are subgraphs of the of the connected component of $g^{\ast}_2$ containing $s^{\ast}_2$,
$s^{\ast}_2$ has also the smallest value of the potential in its connected components of $g^{\ast}_k$, $k=3,\ldots,n$. Therefore, it must be a sink 
of $g^{\ast}_k$, $k=3,\ldots,n$. 
\end{proof}





\end{document}